\newtheorem{theorem}{Theorem}[section]
\newtheorem{proposition}[theorem]{Proposition}
\newtheorem{lemma}[theorem]{Lemma}
\newtheorem{corollary}[theorem]{Corollary}
\theoremstyle{definition}
\newtheorem{definition}[theorem]{Definition}
\theoremstyle{remark}
\numberwithin{equation}{section}
\DeclareMathOperator{\mor}{mor}
\DeclareMathOperator{\Hom}{Hom}
\DeclareMathOperator{\sho}{ho}
\DeclareMathOperator{\colim}{colim}
\DeclareMathOperator{\coker}{coker}
\DeclareMathOperator{\Arr}{Arr}
\newcommand{\boxprod}{\mathbin\square}
\newcommand{\cat}[1]{\mathcal{#1}}
\newcommand{\Z}{\mathbb{Z}}
\newcommand{\Mod}{\text{-mod}}
\newcommand{\ev}{\textup{Ev}}
\newcommand{\mathcolon}{\colon\,}
\newcommand{\uc}{\textup{:}}
\newcommand{\ulp}{\textup{(}}
\newcommand{\urp}{\textup{)}}
\newcommand{\usc}{\textup{;}}
\begin{document}
\title{Smith ideals of structured ring spectra} 

\date{\today}

\author{Mark Hovey}
\address{Department of Mathematics \\ Wesleyan University
\\ Middletown, CT 06459}
\email{hovey@member.ams.org}


\begin{abstract}
Pursuing ideas of Jeff Smith, we develop a homotopy theory of ideals
of monoids in a symmetric monoidal model category.  This includes
Smith ideals of structured ring spectra and of differential graded
algebras.  Such Smith ideals are NOT subobjects, and as a result the
theory seems to require us to consider all Smith ideals of all monoids
simultaneously, rather then restricting to the Smith ideals of one
particular monoid. However, we can take a quotient by a Smith ideal
and get a monoid homomorphism.  In the stable case, we show that this
construction is part of a Quillen equivalence between a model category
of Smith ideals and a model category of monoid homomorphisms.  
\end{abstract}

\maketitle

\section*{Introduction}

There has been a great deal of interest in the last several decades in
the theory of structured ring spectra in algebraic topology.  In
simple terms, these are cohomology theories with cup products that are
infinitely homotopy associative, though we now think of them just as
monoids in a suitable model category of spectra.  In trying to develop
the ring theory of these ring spectra, one runs into a basic problem
right away: a ring spectrum $R$ has no elements.  Thus even a simple
concept like an ideal of a ring spectrum has not been developed.  Of
course, a ring spectrum $R$ has associated to it its homotopy ring
$\pi_{*}R$, and we can talk of ideals here.  But right away we run
into trouble.  For $R=S$, the sphere spectrum, and the element $2\in
\pi_{0}S$, the cofiber of the times $2$ map, the mod $2$ Moore
spectrum, is not a ring spectrum even up to homotopy.  So $(2)$ cannot
be an ideal of $S$, though it is an ideal of $\pi_{*}S$.  

In a talk given on June 6, 2006, Jeff Smith outlined a theory of
ideals of ring spectra.  He did much more than this, using his theory
to discuss algebraic K-theory of pushouts.  Bob Bruner kindly sent me
his notes from the talk, and some of his thoughts with Dan
Isaksen~\cite{bruner-isaksen} about Smith's work.  This paper is my
interpretation of some of that work.  I stress that the material in
this paper was essentially the background material in Smith's talk, so
there is much more left to be done.

Obviously we cannot think of an ideal in a ring spectrum as a
collection of elements.  We should instead think of it as a map
$f\mathcolon I\xrightarrow{}R$.  If $I$ is a two-sided ideal, this
should be a monomorphism of $R$-bimodules.  However, we know that
``monomorphism'' is not a good homotopy-theoretic concept, because
every map should be homotopic to a monomorphism.  So a Smith ideal
should be a map $f\mathcolon I\xrightarrow{}R$ of $R$-bimodules that
is trying to be a monomorphism in some way. 

We could continue in this vein, but let us instead think for a moment
about the category $\Arr \cat{C}$ of maps in a symmetric monoidal
category $\cat{C}$.  A Smith ideal is going to be a certain kind of
object in $\Arr \cat{C}$, and since it is related to monoids in
$\cat{C}$, maybe a Smith ideal should simply be a monoid in $\Arr
\cat{C}$.  For this, we need a symmetric monoidal structure on $\Arr
\cat{C}$ induced by the one from $\cat{C}$.  There is one obvious one,
the \textbf{tensor product monoidal structure}, in which the monoidal
product of $f\mathcolon X_{0}\xrightarrow{}X_{1}$ and $g\mathcolon
Y_{0}\xrightarrow{}Y_{1}$ is 
\[
f\otimes g\mathcolon X_{0}\otimes Y_{0} \xrightarrow{} X_{1} \otimes Y_{1}.
\]
However, a monoid for this monoidal structure is just a monoid
homomorphism, so this cannot be the right monoidal structure.  

The correct monoidal structure on $\Arr \cat{C}$ is the
\textbf{pushout product monoidal structure}, in which the monoidal
product $f\boxprod g$ of $f$ and $g$ is the map 
\[
f\boxprod g\mathcolon (X_{0}\otimes Y_{1}) \amalg_{(X_{0}\otimes
Y_{0})} (X_{1}\otimes Y_{0}) \xrightarrow{} X_{1} \otimes Y_{1}.
\]
This is the same pushout product used in the definition of a symmetric
monoidal model structure.  A monoid for this monoidal structure turns
out to be a monoid $R$, an $R$-bimodule $I$, and a map of
$R$-bimodules $j\mathcolon I\xrightarrow{}R$, such that 
\[
\mu (1\otimes j)=\mu (j\otimes 1)\mathcolon I\otimes I\xrightarrow{}I,
\]
where $\mu$ denotes both the right and left multiplication of $R$ on
$I$.   This is a Smith ideal and is equivalent (by using work of
Bruner and Isaksen~\cite{bruner-isaksen}) to the definition Smith
gave in his talk in terms of enriched functors.  

In this paper, we first construct the pushout product and tensor
product monoidal structures on $\Arr \cat{C}$ in
Section~\ref{sec-arrow}, and show the perhaps suprising result that
the cokernel is a symmetric monoidal functor from the pushout product
monoidal structure to the tensor product monoidal structure.  This
means that the cokernel of a Smith ideal is a ring spectrum.  

We then need to consider homotopy theory.  In
Section~\ref{sec-model-projective} and
Section~\ref{sec-model-injective} we develop two model category
structures on $\Arr \cat{C}$.  In the projective model structure, a
morphism in $\Arr \cat{C}$ is a fibration or weak equivalence if and
only if its two components are so in $\cat{C}$.  The projective model
structure is compatible with the tensor product monoidal structure.
In the injective model structure, a morphism is a cofibration or weak
equivalence if and only if its two components are so in $\cat{C}$.
The injective model structure is compatible with the pushout product
monoidal structure.  As a result, we get model categories of Smith
ideals and monoid homomorphisms, and the cokernel is a left Quillen
functor from Smith ideals to monoid homomorphisms.  To take the
homotopically meaningful quotient of a ring spectrum $R$ by a Smith
ideal $j\mathcolon I\xrightarrow{}R$, we must first take a cofibrant
approximation $j'\mathcolon I'\xrightarrow{}R'$ in the model category
of Smith ideals, and then take $R'/I'$.  Note that $R'$ is weakly
equivalent as a monoid to $R$, but of course it is not $R$.  

In Section~\ref{sec-stable} we prove that the cokernel is a Quillen
\textbf{equivalence} in case our model category $\cat{C}$ is stable,
as for example any model category of spectra.  Thus, in the stable
case, a Smith ideal is the same information homotopically as a monoid
homomorphism.  This is a bit disconcerting, since this says, for
example, that every structured ring spectrum $R$ is weakly equivalent
to the quotient of the sphere $S$ by some Smith ideal.  This would be
like saying that every ring is a quotient of the integers $\Z $.  But
we have to remember that we are free to add an enormous contractible
ring spectrum to $S$ and take a Smith ideal of that, and this is why
we can get $R$ as a quotient of $S$ up to homotopy.  There is also an
appendix where we discuss some technical issues.

There is much work still to be done.  For example, given any map
$f\mathcolon I\xrightarrow{}R$, what is the Smith ideal generated by
$f$?  The answer must be the free monoid $Tf$ on $f$ in the pushout
product monoidal structure.  The problem is that this is a Smith ideal
of the free monoid $TR$ on $R$, not of $R$ itself.  The author has
tried various ways to convert this to a Smith ideal of $R$, without
success.  It is possible that we have to accept that the ideal
generated by $2\mathcolon S\xrightarrow{}S$ is an ideal of $TS=S[x]$,
not of $S$.  Even in that case, it would be extremely interesting to
know the quotient of $S[x]$ by $T (2)$.

A related question is to fix the quotient issue.  We have the wrong
definition of quotient if every ring spectrum is a quotient of $S$.
Our weakening of the definition of ideal means that we should
strengthen the definition of quotient.   One way to do it is to define
a monoid homomophism $p\mathcolon R\xrightarrow{}S$ to be a
\textbf{strong quotient} if the map
\[
S\otimes_{R} QN \xrightarrow{}N
\]
is a weak equivalence for all fibrant $S$-modules $N$, where $Q$
denotes cofibrant replacement in the category of $R$-modules.  This
would mean that the homotopy category of $S$-modules is a fully
faithful subcategory of the homotopy category of $R$-modules.  Is this
a useful notion?  Is it possible to use Smith ideals to help classify
strong quotients of ring spectra?  We don't know the answers.  

Finally, we have not dealt with the commutative situation at all.  It
is generally much more difficult to determine whether commutative
monoids in a symmetric monoidal model category inherit a model
structure, as there are several well-known instances where it is
false.  But if they do, one might hope to define a commutative Smith
ideal as a commutative monoid in the arrow category.  This would give
us a potentially different notion of ideal, and we would like to know
how different.  

Obiovusly, this talk owes everything to Jeff Smith's 2006 lecture, and
the author thanks him for having such wonderful ideas.  The author is
certain that he has not come close to the depth of Smith's vision
about the subject.  The author also thanks Bob Bruner and Dan Isaksen
for sharing their very helpful thoughts on Smith ideals.  

Throughout this paper, $\cat{C}$ will denote a bicomplete closed
symmetric monoidal category with monoidal product $A\otimes B$, closed
structure $\Hom (A,B)$, and unit $S$.  Usually $\cat{C}$ will also be
a model category, and in that case we will always assume the model
structure is compatible the monoidal structure, so that $\cat{C}$ is
a symmetric monoidal model category.  Facts about model categories can
generally be found in~\cite{hovey-model}, among other places, but we
try to cite specific facts more precisely.  

\section{The arrow category of a symmetric monoidal model
category}\label{sec-arrow}

In this section, we show that there are two different closed symmetric
monoidal structures on the arrow category $\Arr \cat{C}$ of our closed
symmetric monoidal category $\cat{C}$.  We also discuss the cokernel
functor, which is a symmetric monoidal functor from one of these
structures to the other. Finally, we discuss monoids and modules in
each of these symmetric monoidal structures.  We postpone all
discussion of homotopy theory to the next section.  

Recall that an object $\Arr \cat{C}$ is simply a map $f$ of $\cat{C}$.
For notational convenience, we will often write $f$ as $f\mathcolon
X_{0}\xrightarrow{}X_{1}$. A map $\alpha \mathcolon
f\xrightarrow{}g$ in $\Arr \cat{C}$ is then a commutative square
\[
\begin{CD}
X_{0} @>f>> Y_{1} \\
@V\alpha_{0}VV @VV\alpha_{1}V \\
Y_{0} @>>g> Y_{1}.
\end{CD}
\]
Note that $\Arr \cat{C}$ is the category of functors from the category
$\cat{J}$ with two objects $0$ and $1$ and one non-identity map
$0\xrightarrow{}1$ to $\cat{C}$.  Hence, since $\cat{C}$ is
bicomplete, so is $\Arr \cat{C}$, with limits and colimits taken
objectwise. In addition, if $\cat{C}$ is locally presentable, so is
$\Arr \cat{C}$, as a category of small diagrams in a locally
presentable category~\cite{adamek-rosicky}.

There are obvious functors $\ev_{0},\ev_{1}\mathcolon \Arr
\cat{C}\xrightarrow{}\cat{C}$, and these have left and right
adjoints.  We leave the following lemma to the reader.  

\begin{lemma}\label{lem-eval}
Suppose $\cat{C}$ is a closed symmetric monoidal category.  The
evaluation functors $\ev_{0},\ev_{1}\mathcolon \Arr
\cat{C}\xrightarrow{}\cat{C}$ have left adjoints $L_{0},L_{1}$ and
right adjoints $U_{0},U_{1}$.  We have
\[
L_{0} (X)=U_{1}X= 1_{X}, L_{1} (X)=0\xrightarrow{}X, \text{ and }
U_{0} (X)=X\xrightarrow{}*.  
\]
\end{lemma}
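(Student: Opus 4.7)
The plan is to verify the four adjunctions directly by exhibiting natural bijections on hom-sets; this is routine since $\cat{J}$ is so small. For each claim, I write an arbitrary object $f \in \Arr \cat{C}$ as $f \colon Y_0 \to Y_1$ and unpack what a morphism in $\Arr \cat{C}$ amounts to, using the commutative-square description recalled in the text.

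For $L_0 \dashv \ev_0$, a morphism $L_0 X = (X \xrightarrow{1_X} X) \to f$ is a commutative square whose top arrow $X \to Y_0$ is arbitrary and whose bottom arrow is forced to be the composite $f \circ (X \to Y_0)$. This gives a natural bijection with $\Hom_{\cat{C}}(X, Y_0) = \Hom_{\cat{C}}(X, \ev_0 f)$. Dually, a morphism $f \to U_1 X = (X \xrightarrow{1_X} X)$ is a square whose bottom arrow $Y_1 \to X$ is arbitrary and whose top arrow is forced to be the composite; this yields the adjunction $\ev_1 \dashv U_1$.

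For $L_1 \dashv \ev_1$, since $L_1 X = (0 \to X)$, a morphism $L_1 X \to f$ has a unique top component $0 \to Y_0$, and the commutativity constraint is automatic, so the morphism is determined freely by its bottom component $X \to Y_1$. Dually, $U_0 X = (X \to *)$ has a unique bottom component, so a morphism $f \to U_0 X$ is determined freely by its top component $Y_0 \to X$, giving $\ev_0 \dashv U_0$.

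In each case the constructed bijections are natural in both variables by inspection, and the unit/counit descriptions are immediate from the formulas. There is no genuine obstacle; the only thing to watch is keeping track of which square commutativity forces a component and which leaves it free, which is opposite for $L_i$ versus $U_i$ and governs why $L_0 = U_1$ but $L_1 \neq U_0$ in general.
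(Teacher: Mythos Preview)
Your verification is correct and is exactly the routine check the paper intends: the lemma is stated with the remark ``We leave the following lemma to the reader,'' and your direct unpacking of the commutative-square hom-sets is precisely that omitted argument. There is nothing to add.
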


We now discuss monoidal structures on $\Arr \cat{C}$, of which there
are at least two. 

\begin{theorem}\label{thm-arrow-monoidal}
Let $\cat{C}$ be a closed symmetric monoical category.  The category
$\Arr \cat{C}$ has two different closed symmetric monoidal structures.
In the \textbf{tensor product monoidal structure}, the monoidal
product of $f\mathcolon X_{0}\xrightarrow{}X_{1}$ and $g\mathcolon
Y_{0}\xrightarrow{}Y_{1}$ is given by
\[
f\otimes g \mathcolon X_{0} \otimes Y_{0} \xrightarrow{} X_{1} \otimes
Y_{1}.  
\]
The unit is $L_{0}S$, and the closed structure is given by the
projection map
\[
\Hom_{\otimes} (f,g) = \Hom (X_{0},Y_{0}) \times_{\Hom (X_{0},Y_{1})}
\Hom (X_{1},Y_{1}) \xrightarrow{} \Hom (X_{1}, Y_{1}).
\]
In the \textbf{pushout product monoidal structure}, the monoidal
product of $f$ and $g$ is the pushout product
\[
(X_{0}\otimes Y_{1}) \amalg_{(X_{0}\otimes Y_{0})} (X_{1}\otimes
Y_{0}) \xrightarrow{} X_{1} \otimes Y_{1}.
\]
The unit is $L_{1}S$, and the closed structure $\Hom_{\boxprod} (f,g)$
is given by  
\[
\Hom_{\boxprod} (f,g) = \Hom (X_{1},Y_{0}) \xrightarrow{} \Hom
(X_{0},Y_{0}) \times_{\Hom (X_{0},Y_{1})} \Hom (X_{1},Y_{1}).
\]
\end{theorem}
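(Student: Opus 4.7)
The plan is to verify both monoidal structures on $\Arr\cat{C}$ by exploiting the identification $\Arr\cat{C} = \Fun(\cat{J},\cat{C})$, where $\cat{J}$ has two objects $0,1$ and a single nonidentity arrow. For the tensor product structure, $f \otimes g$ is simply the componentwise tensor of $f$ and $g$ viewed as diagrams in $\cat{C}$, so the associator, unitor, braiding, and all coherence axioms are inherited objectwise from $\cat{C}$; the unit $1_S : S \to S$ coincides with $L_0 S$ by Lemma~\ref{lem-eval}.

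For the pushout product structure, I would first check by inspection that $L_1 S = (0 \to S)$ is a unit: its pushout product with $g : Y_0 \to Y_1$ simplifies, after the terms involving the initial object collapse, to the arrow $Y_0 \xrightarrow{g} Y_1$. For associativity, symmetry, and their coherence, the cleanest route is to identify the iterated pushout product $f_1 \boxprod \cdots \boxprod f_n$ as the latching map of the $n$-cube of tensor products: it is the canonical map from the colimit of the punctured cube $\{0,1\}^n \setminus \{(1,\ldots,1)\} \to \cat{C}$ (sending the $i$-th coordinate $0 \mapsto \dom f_i$ and $1 \mapsto \codom f_i$) to the terminal vertex. Since this description is manifestly symmetric and independent of any bracketing, associativity and symmetry follow simultaneously, with all coherence diagrams reducing to those for $\otimes$ in $\cat{C}$.

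For the closed structures I would verify the adjunctions
\[
\Arr\cat{C}(f \otimes g, h) \cong \Arr\cat{C}(f, \Hom_\otimes(g,h)), \qquad \Arr\cat{C}(e \boxprod f, g) \cong \Arr\cat{C}(e, \Hom_\boxprod(f,g))
\]
by direct computation. A morphism on the left unpacks as a collection of maps in $\cat{C}$ subject to commutativity constraints; applying the closed structure of $\cat{C}$ to transpose each component and invoking the universal property of the pullback appearing in the internal hom packages these constraints into a single morphism on the right, with naturality in all three variables visible from the construction.

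The main obstacle is the closedness of the pushout product. Where $\Hom_\otimes$ has a manifestly componentwise flavor, $\Hom_\boxprod(f,g)$ has the more elaborate shape of a single hom object mapping into a pullback, so one must carefully match each edge of the outer square on the $\Hom$ side with the correct pushout-compatibility on the $\boxprod$ side. Every condition nevertheless reduces, after applying the $\otimes$-$\Hom$ adjunction in $\cat{C}$, to a universal property of pushout or pullback, so once the bookkeeping is organized the bijection follows routinely.
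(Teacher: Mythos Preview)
Your proposal is correct and follows essentially the same approach as the paper: the paper also leaves most verifications to the reader and singles out associativity of the pushout product as the nontrivial point, handling it by identifying both $(f\boxprod g)\boxprod h$ and $f\boxprod(g\boxprod h)$ with the map from $\colim_{(i,j,k)\neq(1,1,1)} X_i\otimes Y_j\otimes Z_k$ to $X_1\otimes Y_1\otimes Z_1$, which is exactly your punctured-cube latching description specialized to $n=3$. Your treatment of the closed structures is more explicit than the paper's, which simply asserts them.
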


We note that $\Arr \cat{C}$ itself, with either closed symmetric
monoidal structure above, is an appropriate input for
Theorem~\ref{thm-arrow-monoidal}.  That is, we can iterate the
construction and get various closed symmetric monoidal structures on
$\Arr \Arr \cat{C}$, the category of commutative squares in
$\cat{C}$.  We can of course continue this iteration.  

\begin{proof}
We leave the majority of this proof to the reader.  The most annoying
thing to construct is the associativity isomorphism for the pushout
product monoidal structure.  Here the idea is
to use the properties of colimits to conclude that both $(f\boxprod
g)\boxprod h$ and $f\boxprod (g\boxprod h)$ are isomorphic to 
\[
\colim_{(i,j,k)\neq (1,1,1)} (X_{i}\otimes Y_{j}\otimes
Z_{k} \xrightarrow{} X_{1} \otimes Y_{1} \otimes
Z_{1},
\]
where $h\mathcolon Z_{0}\xrightarrow{}Z_{1}$.
\end{proof}

It is useful to record how $L_{0}$ and $L_{1}$ interact with these
symmetric monoidal structures.  

\begin{lemma}\label{lem-eval-tensor}
Let $\cat{C}$ be a closed symmetric monoidal category.  As a functor
to the tensor product monoidal structure, $L_{0}\mathcolon
\cat{C}\xrightarrow{}\Arr \cat{C}$ is symmetric monoidal, whereas
$L_{1}X \otimes f=L_{1} (X\otimes \ev_{1}f)$.  As a functor to the
pushout product monoidal structure, $L_{1}$ is symmetric monoidal,
whereas $L_{0} (X)\boxprod f=L_{0} (X\otimes \ev_{1}f)$.
\end{lemma}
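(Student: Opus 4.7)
The plan is to verify each of the four assertions by direct unpacking of the definitions of $L_0, L_1$ from Lemma~\ref{lem-eval} together with the formulas for $\otimes$ and $\boxprod$ from Theorem~\ref{thm-arrow-monoidal}. Three of the four claims reduce to one-line calculations; the symmetric monoidal statements require a little more, namely checking that units agree and that the natural isomorphisms $\mu_{X,Y}\colon FX\odot FY \to F(X\otimes Y)$ (for $F=L_i$, $\odot$ the relevant monoidal product) are compatible with associators, unitors, and the braiding. Because all coherence data in $\Arr\cat{C}$ is defined objectwise from that in $\cat{C}$, these will reduce to the coherence axioms already known in $\cat{C}$.

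First I would handle $L_0$ as a functor into the tensor product structure. The unit of that structure is $L_0 S$ by Theorem~\ref{thm-arrow-monoidal}, so units match on the nose. For the product, $L_0 X = 1_X$ and $L_0 Y = 1_Y$, and by the definition of $\otimes$ on $\Arr\cat{C}$ we have $L_0 X \otimes L_0 Y = (1_X\otimes 1_Y) = 1_{X\otimes Y} = L_0(X\otimes Y)$. The associator, symmetry, and unit isomorphisms for $L_0 X \otimes L_0 Y \otimes L_0 Z$ are just the identity $1$ applied to the associator/symmetry/unitor of $\cat{C}$ in each of the two components, so the required coherence diagrams in $\Arr\cat{C}$ collapse to those in $\cat{C}$. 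For the second half of the first sentence, with $L_1 X = (0\to X)$ and $f\colon X_0\to X_1$, the tensor product structure gives $L_1 X \otimes f$ as the map on components, that is $0\otimes X_0 \to X\otimes X_1$, i.e.\ $0\to X\otimes X_1 = L_1(X\otimes \ev_1 f)$.

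Next I would do the mirror argument for $L_1$ into the pushout product structure. By Theorem~\ref{thm-arrow-monoidal} the unit of $\boxprod$ is $L_1 S$, matching $L_1$ of the unit in $\cat{C}$. For the product, $L_1 X \boxprod L_1 Y$ is the pushout product of $0\to X$ and $0\to Y$; the pushout $(0\otimes Y)\amalg_{0\otimes 0}(X\otimes 0)$ is $0$, so the map is $0\to X\otimes Y$, which is $L_1(X\otimes Y)$. Coherence is again inherited from $\cat{C}$ by observing that the iterated pushout product of arrows of the form $0\to W_i$ collapses to the tensor product of the codomains in $\cat{C}$, as indicated at the end of the proof of Theorem~\ref{thm-arrow-monoidal}. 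For the final formula, take $L_0 X = 1_X$ and $f\colon Y_0\to Y_1$; the pushout in the definition of $\boxprod$ has one leg which is the identity $1_X\otimes f_0$ in the first variable, so $(X\otimes Y_1)\amalg_{X\otimes Y_0}(X\otimes Y_0) \cong X\otimes Y_1$, and the induced map to $X\otimes Y_1$ is the identity, i.e.\ $L_0(X\otimes Y_1) = L_0(X\otimes \ev_1 f)$.

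There is no genuine obstacle here; the only mild annoyance is bookkeeping the coherence diagrams for the symmetric monoidal functor structures. I would handle this uniformly by noting that both $L_0$ and $L_1$ are fully faithful and that the restriction of $\otimes$ (resp.\ $\boxprod$) to their images recovers, objectwise, the monoidal product of $\cat{C}$; hence all coherence in $\Arr\cat{C}$ pulls back to coherence in $\cat{C}$ that is already part of the hypothesis.
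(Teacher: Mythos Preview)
Your proposal is correct and fills in precisely the kind of direct verification the paper has in mind; the paper itself leaves this proof to the reader, so there is nothing to compare against. The only minor quibble is the phrase ``the identity $1_X\otimes f_0$ in the first variable'' in your last computation, which is garbled notation---you mean that one leg of the pushout span is the identity $1_{X\otimes Y_0}$---but the conclusion you draw from it is right.
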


Again, we leave the proof to the reader.  

Since we are interested in exploring the relationship between ideals
and quotients, we need to understand the functor $\coker \mathcolon
\Arr \cat{C}\xrightarrow{}\Arr \cat{C}$.  Here we need to assume
$\cat{C}$ is pointed.  The cokernel functor is defined by 
\[
\coker (f\mathcolon A\xrightarrow{}B) = (B\xrightarrow{}\coker f),
\]
where we rely on context to distinguish between $\coker f$ as a map in
$\cat{C}$ and as an object in $\cat{C}$.

We can now see the importance of the two different symmetric monoidal
structures on $\Arr \cat{C}$. 

\begin{theorem}\label{thm-cokernel-monoidal}
Suppose $\cat{C}$ is a pointed closed symmetric monoidal category.
The functor $\coker \mathcolon \Arr \cat{C}\xrightarrow{}\Arr \cat{C}$
is a \ulp strongly\urp symmetric monoidal functor from the pushout
product monoidal structure to the tensor product monoidal structure.
Its right adjoint is the kernel.
\end{theorem}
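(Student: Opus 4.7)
The plan is to verify four things in turn: (i) that $\coker$ sends the pushout-product unit $L_{1}S$ to the tensor-product unit $L_{0}S$; (ii) that there is a natural isomorphism $\coker(f\boxprod g)\cong \coker(f)\otimes \coker(g)$; (iii) that the resulting structure maps satisfy the coherence axioms of a strong symmetric monoidal functor; and (iv) that the kernel functor $\ker(g\colon Y_{0}\to Y_{1}) = (\ker g \to Y_{0})$ is right adjoint to $\coker$.

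Step (i) is immediate: $L_{1}S = (0\to S)$, so $\coker(L_{1}S)$ is the identity arrow $S\to S$, which is exactly $L_{0}S = 1_{S}$. For step (ii), given $f\colon X_{0}\to X_{1}$ and $g\colon Y_{0}\to Y_{1}$, write $P$ for the domain of $f\boxprod g$, so that $\coker(f\boxprod g) = (X_{1}\otimes Y_{1}\to (X_{1}\otimes Y_{1})/P)$. The key calculation is to identify $(X_{1}\otimes Y_{1})/P$ with $(X_{1}/X_{0})\otimes (Y_{1}/Y_{0})$. Since $\cat{C}$ is closed, $\otimes$ preserves colimits in each variable, so writing $\coker f = *\amalg_{X_{0}}X_{1}$ and $\coker g = *\amalg_{Y_{0}}Y_{1}$ and iteratively distributing $\otimes$ over these pushouts realizes $\coker f\otimes \coker g$ as the pushout of $P\to X_{1}\otimes Y_{1}$ along $P\to *$, which is exactly $(X_{1}\otimes Y_{1})/P$. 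The induced arrow $X_{1}\otimes Y_{1}\to (X_{1}/X_{0})\otimes (Y_{1}/Y_{0})$ then matches $\coker(f)\otimes \coker(g)$ computed in the tensor-product monoidal structure of Theorem~\ref{thm-arrow-monoidal}. Step (iii) -- the associator, unitor, and symmetry coherence diagrams for $\coker$ -- reduces to the universal property of iterated cokernels together with the coherence already present in $\cat{C}$, so it is mechanical.

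For step (iv), let $f\colon X_{0}\to X_{1}$ and $g\colon Y_{0}\to Y_{1}$. A morphism $\coker f \to g$ in $\Arr\cat{C}$ is a commutative square whose top edge is some $\alpha\colon X_{1}\to Y_{0}$ and whose bottom edge is a map $\beta\colon X_{1}/X_{0}\to Y_{1}$ with $g\alpha$ equal to the composite $X_{1}\to X_{1}/X_{0}\xrightarrow{\beta} Y_{1}$; the universal property of the cokernel says such a $\beta$ exists, and is then unique, precisely when $g\alpha f = 0$. Dually, a morphism $f \to \ker g$ is a square whose bottom edge is some $\alpha\colon X_{1}\to Y_{0}$ and whose top edge is a factorization $X_{0}\to \ker g$ of $\alpha f$, which exists precisely when $g\alpha f = 0$. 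Both Hom-sets are therefore naturally in bijection with $\{\alpha\colon X_{1}\to Y_{0}\mid g\alpha f = 0\}$, proving $\coker \dashv \ker$. The main obstacle is step (ii): one must chase a fairly involved iterated colimit, but once distributivity of $\otimes$ over pushouts is invoked the computation collapses and the remainder of the verification is bookkeeping.
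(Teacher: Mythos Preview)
Your proof is correct and follows essentially the same route as the paper's.  For step~(ii) the paper packages the computation as an explicit $3\times 3$ diagram of spans and invokes the fact that pushouts commute with pushouts, whereas you phrase the same computation as iteratively distributing $\otimes$ over the pushouts defining $\coker f$ and $\coker g$; these are the same argument.  The paper leaves both the coherence check and the $\coker\dashv\ker$ adjunction to the reader, so your explicit bijection in step~(iv) (using that the cokernel map is an epimorphism and the kernel map a monomorphism to pin down the data by $\alpha$ alone) is more detail than the paper provides, not a different approach.
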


\begin{proof}
First note that the cokernel preserves the units, since 
\[
\coker (0\xrightarrow{}S) = (S\xrightarrow{=}S).  
\]
To see the cokernel is monoidal, we use the fact that pushouts commute
with each other.  More precisely, we have the following commutative
diagram, for maps $f\mathcolon X_{0}\xrightarrow{}X_{1}$ and
$g\mathcolon Y_{0}\xrightarrow{}Y_{1}$.  
\[
\begin{CD}
X_{1}\otimes Y_{1} @<<< X_{0} \otimes Y_{1} @>>> 0 \\
@AAA @AAA @| \\
X_{1} \otimes Y_{0} @<<< X_{0} \otimes  Y_{0} @>>> 0 \\
@| @VVV @| \\
X_{1} \otimes Y_{0} @= X_{1} \otimes Y_{0} @>>> 0
\end{CD}
\]
If we take vertical pushouts in this diagram, we get the diagram 
\[
X_{1} \otimes Y_{1} \xleftarrow{f\boxprod g} (X_{0}\otimes
Y_{1})\amalg_{X_{0}\otimes Y_{0}} (X_{1} \otimes Y_{0}) \xrightarrow{}
0,
\]
whose pushout is $\coker (f\boxprod g)$.  On the other hand, if we
take the horizontal pushouts instead, we get the diagram 
\[
\coker f \otimes Y_{1} \xleftarrow{} \coker f \otimes Y_{0}
\xrightarrow{} 0,
\]
whose pushout is $\coker f\otimes \coker g$.  Since pushouts commute
with each other, we see that 
\[
\coker (f\boxprod g) \cong \coker f\otimes \coker g,
\]
both as objects in $\cat{C}$ and as maps in $\cat{C}$.  We leave the
check that the required coherence diagrams commute to the reader, as
also the proof that the kernel is right adjoint to the cokernel.  
\end{proof}

We now consider the monoids and modules in our two symmetric monoidal
structures on $\Arr \cat{C}$.  

\begin{proposition}\label{prop-tensor-monoid}
Let $\cat{C}$ be a closed symmetric monoidal category.  A monoid in
the tensor product monoidal structure on $\Arr \cat{C}$ is simply a
monoid homomorphism $p\mathcolon R_{0}\xrightarrow{}R_{1}$ in
$\cat{C}$.  A module over the monoid $p$ is an $R_{0}$-module $M_{0}$,
an $R_{1}$-module $M_{1}$, and an $R_{0}$-module map $f\mathcolon
M_{0}\xrightarrow{}M_{1}$, where $M_{1}$ is an $R_{0}$-module by
restricting scalars through $p$.
\end{proposition}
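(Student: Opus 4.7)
The plan is to unfold the definition of a monoid and a module in $(\Arr \cat{C}, \otimes)$ and match the data with that of a monoid homomorphism and a module over one.

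For the monoid statement: an object of $\Arr \cat{C}$ is $p \colon R_0 \to R_1$, and by the tensor product formula we have $(p \otimes p) \colon R_0 \otimes R_0 \to R_1 \otimes R_1$. A multiplication $\mu \colon p \otimes p \to p$ is then a commuting square consisting of two maps $\mu_0 \colon R_0 \otimes R_0 \to R_0$ and $\mu_1 \colon R_1 \otimes R_1 \to R_1$ satisfying $p \mu_0 = \mu_1 (p \otimes p)$. The unit object is $L_0 S = (\mathrm{id}_S \colon S \to S)$ by Lemma~\ref{lem-eval}, so a unit map $\eta \colon L_0 S \to p$ consists of $\eta_0 \colon S \to R_0$ and $\eta_1 \colon S \to R_1$ with $p \eta_0 = \eta_1$. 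The associativity and unit axioms for $\mu$ are commuting diagrams in $\Arr \cat{C}$, and since limits (and in particular equalizers) are taken objectwise, they hold if and only if they hold separately at levels $0$ and $1$. Thus $(R_0, \mu_0, \eta_0)$ and $(R_1, \mu_1, \eta_1)$ are monoids in $\cat{C}$, and the two compatibility conditions $p \mu_0 = \mu_1 (p \otimes p)$ and $p \eta_0 = \eta_1$ say exactly that $p$ is a monoid homomorphism. Conversely, any monoid homomorphism produces such data.

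For the module statement: a module structure on $f \colon M_0 \to M_1$ over the monoid $p$ is a map $p \otimes f \to f$ in $\Arr \cat{C}$, i.e.\ maps $\alpha_0 \colon R_0 \otimes M_0 \to M_0$ and $\alpha_1 \colon R_1 \otimes M_1 \to M_1$ with $f \alpha_0 = \alpha_1 (p \otimes f)$. Again, since the module axioms are objectwise in $\Arr \cat{C}$, $\alpha_0$ makes $M_0$ an $R_0$-module and $\alpha_1$ makes $M_1$ an $R_1$-module. The compatibility $f \alpha_0 = \alpha_1 (p \otimes f)$ is exactly the statement that $f$ is a map of $R_0$-modules, where $M_1$ is viewed as an $R_0$-module through $p$.

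The content is essentially just unraveling; the only mild obstacle is to be careful that what one would naively write as ``$\mu$ a map in $\Arr \cat{C}$'' really is the same as a pair of monoid multiplications compatible with $p$, which follows from the fact that the monoidal product in the tensor structure is defined levelwise and that limits in $\Arr \cat{C}$ are computed levelwise. The converse directions in both parts are immediate: given a monoid homomorphism or a compatible map of modules, one reads off the required arrow-category structure.
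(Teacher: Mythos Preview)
Your proof is correct and follows essentially the same approach as the paper: both unfold the multiplication and unit maps as commutative squares, observe that the associativity and unit axioms hold in $\Arr \cat{C}$ if and only if they hold at levels $0$ and $1$, and conclude that the data amount to monoids $R_{0}$, $R_{1}$ together with a homomorphism $p$ (and similarly for modules). Your write-up is a bit more explicit than the paper's, but the argument is the same.
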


Said another way, with the tensor product monoidal structure, the
monoids in the arrow category of $\cat{C}$ are the arrows in the
monoid category of $\cat{C}$.  

\begin{proof}
A monoidal structure on $p$ consists of a unit map 
\[
\begin{CD}
S @= S \\
@VVV @VVV \\
R_{0} @>>p> R_{1}
\end{CD}
\]
and a multiplication map
\[
\begin{CD}
R_{0}\otimes R_{0} @>p\otimes p>> R_{1}\otimes R_{1} \\
@VVV @VVV \\
R_{0} @>>p> R_{1}.
\end{CD}
\]
These are just equivalent to a unit and a multiplication for $R_{0}$ and
$R_{1}$ that are preserved by $p$.  The associativity and unit diagrams
says that the multiplications on $R_{0}$ and $R_{1}$ are associative and
unital.  The proof for modules is similar. 
\end{proof}

The monoids and modules in the pushout product monoidal structure are
much more interesting. 

\begin{definition}\label{defn-Smith}
Let $\cat{C}$ be a closed symmetric monoidal category.  A
\textbf{Smith ideal} in $\cat{C}$ is a monoid $j\mathcolon
I\xrightarrow{}R$ in the pushout product monoidal structure on $\Arr
\cat{C}$, which we often denote $(R,I)$.  Given a Smith ideal $(R,I)$,
a (right) \textbf{$(R,I)$-module} is a right module in the pushout
product monoidal structure on $\Arr \cat{C}$ over the monoid $j$.
\end{definition}
 
The ``Smith'' in Smith ideal is Jeff Smith.  We will discuss the
relationship between our definition and the definition given by Smith
below.  

Our first job is to unwind these definitions. 

\begin{proposition}\label{prop-ideal}
Let $\cat{C}$ be a closed symmetric monoidal category.  A Smith ideal
$j\mathcolon I\xrightarrow{}R$ in $\cat{C}$ is equivalent to a monoid
$R$ in $\cat{C}$, an $R$-bimodule $I$ in $\cat{C}$, and a morphism
$j\mathcolon I\xrightarrow{}R$ of $R$-bimodules such that the diagram
below commutes.
\[
\begin{CD}
I\otimes_{R}I @>j\otimes 1>> R\otimes_{R} I \\
@Vi\otimes j VV @VV\cong V \\
I\otimes_{R} R @>>\cong > I
\end{CD}
\]
\end{proposition}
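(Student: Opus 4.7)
The plan is to unpack, layer by layer, what a monoid structure on $j\mathcolon I \to R$ in the pushout product monoidal structure amounts to. The unit is a morphism $L_{1}S \to j$, which by Lemma~\ref{lem-eval} is the same as a map $\eta\mathcolon S \to R$. The multiplication is a morphism $j \boxprod j \to j$, i.e.\ a commutative square
\[
\begin{CD}
(I\otimes R)\amalg_{I\otimes I}(R\otimes I) @>>> I \\
@VVV @VVjV \\
R\otimes R @>>\mu> R.
\end{CD}
\]
By the universal property of the pushout, the top arrow is equivalent to a pair $\mu_{\ell}\mathcolon R\otimes I \to I$ and $\mu_{r}\mathcolon I\otimes R \to I$ that coincide on $I\otimes I$ via $j$, i.e.\ $\mu_{\ell}(j\otimes 1)=\mu_{r}(1\otimes j)$; commutativity of the square with the vertical maps of $j\boxprod j$ reads $j\mu_{\ell}=\mu(j\otimes 1)$ and $j\mu_{r}=\mu(1\otimes j)$, which is precisely the $R$-bimodule compatibility for $j$.

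Second, I would check that the monoid axioms for the triple $(j,\eta,m)$ decompose cleanly into the monoid axioms for $(R,\mu,\eta)$ on the codomain component and the $R$-bimodule axioms for $I$ on the domain component. For associativity, one evaluates the square $j\boxprod j\boxprod j \to j$ componentwise; using the description of the domain of a triple pushout product as $\colim_{(i,j,k)\neq(1,1,1)}X_{i}\otimes Y_{j}\otimes Z_{k}$ recalled in the proof of Theorem~\ref{thm-arrow-monoidal}, the axiom breaks into associativity of $\mu$, associativity of $\mu_{\ell}$ and of $\mu_{r}$ separately, and the mixed compatibility $\mu_{r}(\mu_{\ell}\otimes 1)=\mu_{\ell}(1\otimes\mu_{r})$ on $R\otimes I\otimes R$. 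The unit axioms similarly supply $\mu(\eta\otimes 1)=\mu(1\otimes\eta)=\mathrm{id}_{R}$ together with $\mu_{\ell}(\eta\otimes 1)=\mu_{r}(1\otimes\eta)=\mathrm{id}_{I}$. Together these identify a monoid $j$ with the claimed package: $R$ a monoid, $I$ an $R$-bimodule, and $j$ a bimodule morphism.

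Finally I would translate the leftover equalizer identity $\mu_{\ell}(j\otimes 1)=\mu_{r}(1\otimes j)$ on $I\otimes I$ into the diagram in the statement. Since $j$ is an $R$-bimodule map, a brief check using associativity of the actions and the relations $j\mu_{\ell}=\mu(j\otimes 1)$, $j\mu_{r}=\mu(1\otimes j)$ shows that both sides are $R$-balanced and hence factor through the coequalizer $I\otimes I\to I\otimes_{R}I$. Via the isomorphisms $R\otimes_{R}I\cong I$ and $I\otimes_{R}R\cong I$, the two induced maps $I\otimes_{R}I\to I$ are exactly the two composites in the square of the proposition; conversely, because $I\otimes I\to I\otimes_{R}I$ is an epimorphism, an equality on $I\otimes_{R}I$ pulls back to one on $I\otimes I$. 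The main obstacle is really only the middle step: keeping the bookkeeping of the triple pushout and its projections onto the seven constituent triple tensors organized so that the associativity and unit axioms for $j$ separate precisely into the monoid, bimodule, and bimodule-morphism conditions.
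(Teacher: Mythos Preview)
Your proposal is correct and follows essentially the same route as the paper's proof: unpack the unit to get $\eta\mathcolon S\to R$, unpack the multiplication $j\boxprod j\to j$ to get $\mu$, $\mu_\ell$, $\mu_r$ and the compatibilities, then read off associativity and unit from the triple pushout product. Your final paragraph, explaining why the equality $\mu_\ell(j\otimes 1)=\mu_r(1\otimes j)$ on $I\otimes I$ descends to $I\otimes_R I$, is a detail the paper leaves implicit; note only that your labels in the compatibility equations are swapped (it should be $j\mu_\ell=\mu(1\otimes j)$ and $j\mu_r=\mu(j\otimes 1)$, matching the components of the left vertical map in your square).
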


Of course the tensor products in the commutative diagram above are
tensor products of bimodules, so involve both the right and left
action of $R$ on $I$.  

\begin{proof}
A monoid structure on $j$ is given by a multiplication map $\mu
\mathcolon j\boxprod j\xrightarrow{}j$ and a unit $\eta \mathcolon
L_{1}S\xrightarrow{}j$ making the usual associativity and unit
diagrams commute.  Writing $j\mathcolon I\xrightarrow{}R$, we see that
$\eta$ is equivalent to a map $\eta \mathcolon S\xrightarrow{}R$, and
$\mu$ is equivalent to the commutative diagram below.
\[
\begin{CD}
(I\otimes R) \amalg_{I\otimes I} (R\otimes I) @>>> R\otimes R \\
@VVV @VVV \\
I @>>j> R
\end{CD}
\]
Thus the existence of $\mu$ is equivalent to a multiplication on $R$,
a left and right multiplication of $R$ on $I$ that agree on $I\otimes
I$, and the fact that $j$ preserves those multiplications. If we write
out $j\boxprod (j\boxprod j)$ carefully, we see that its domain is an
iterated pushout involving $I\otimes (R\otimes R)$, $R\otimes
(I\otimes R)$, and $R\otimes (R\otimes I)$, and its codomain is
$R\otimes (R\otimes R)$.  Similarly, the domain of $(j\boxprod
j)\boxprod j$ is an iterated pushout involving $(I\otimes R)\otimes
R$, $(R\otimes I)\otimes R$, and $(R\otimes R)\otimes I$, and the
codomain is $(R\otimes R)\otimes R$.  Therefore, the associativity
diagram for $\mu$ is equivalent to associativity of the left
multiplication of $R$ on $I$, the fact that the left and right
multiplications of $R$ and $I$ commute with each other, associativity
of the right multiplication of $R$ on $I$, and the associativity of
the multiplication on $R$.  The unit diagrams are equivalent to $\eta$
acting as a left and right unit on $R$ and on $I$.
\end{proof}

It was pointed out by Bruner and Isaksen~\cite{bruner-isaksen} that
the data above, an $R$-bimodule map $j\mathcolon I\xrightarrow{}R$
making the diagram in Proposition~\ref{prop-ideal} commute, are
equivalent to a monoid $R$ and a $\cat{C}$-category $\cat{I}$ with two
objects $a$ and $b$ where $\cat{I} (a,a)=\cat{I} (b,b)=\cat{I}
(a,b)=R$ with all compositions involving only these morphism objects
being multiplication in $R$.  This was the definition of an ideal of
$R$ given by Jeff Smith in his talk of June 6, 2006.  The $R$-bimodule
$I$ is then $\cat{I} (b,a)$.

A map of Smith ideals $\alpha \mathcolon (R,I)\xrightarrow{} (R',I')$
is of course just a map of monoids in $\Arr \cat{C}$.  This unwinds to
a map of monoids $\alpha_{1}\mathcolon R\xrightarrow{}R'$ and a map of
$R$-bimodules $\alpha_{0}\mathcolon I\xrightarrow{}I'$, where $I'$ is
an $R$-bimodule through restriction of scalars, such that
$\alpha_{1}j=j'\alpha_{0}$.

We also unwind the definition of a module over a Smith ideal. 

\begin{proposition}\label{prop-module}
If $j\mathcolon I\xrightarrow{}R$ is a Smith ideal in a closed
symmetric monoidal category $\cat{C}$, an $(R,I)$-module is equivalent
to maps of right $R$-modules $f\mathcolon M_{0}\xrightarrow{}M_{1}$
and $\phi \mathcolon M_{1}\otimes_{R}I\xrightarrow{}M_{0}$ such that
the diagrams
\[
\begin{CD}
M_{0}\otimes_{R}I @>1\otimes j>> M_{0} \otimes_{R} R \\
@Vf\otimes 1 VV @VV\cong V \\
M_{1} \otimes_{R} I @>>\phi > M_{0}
\end{CD}
\]
and 
\[
\begin{CD}
M_{1} \otimes_{R} I @>1\otimes j>> M_{1} \otimes_{R} R \\
@V\phi VV @VV\cong V \\
M_{0} @>>f> M_{1}
\end{CD}
\]
commute.  
\end{proposition}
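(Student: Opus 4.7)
The plan is to unwind the axioms for a right module over the monoid $j \colon I \to R$ in $(\Arr\cat{C}, \boxprod)$ by systematically applying the universal property of the pushout, exactly in the style of Proposition~\ref{prop-ideal}.

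A right action $\nu \colon f \boxprod j \to f$ is a commutative square in $\cat{C}$ whose right-hand vertical is $\nu_1 \colon M_1 \otimes R \to M_1$ and whose left-hand vertical, by the pushout description of the source of $f \boxprod j$, is specified by a pair of maps $\alpha \colon M_0 \otimes R \to M_0$ and $\phi \colon M_1 \otimes I \to M_0$ together with the gluing identity $\alpha \circ (1 \otimes j) = \phi \circ (f \otimes 1)$ on $M_0 \otimes I$. Square commutativity then yields $f \circ \alpha = \nu_1 \circ (f \otimes 1)$ and $f \circ \phi = \nu_1 \circ (1 \otimes j)$. The unit axiom, applied through the canonical isomorphism $f \boxprod L_1 S \cong f$ and the unit $\eta \colon L_1 S \to j$ of Proposition~\ref{prop-ideal}, forces both $\alpha$ and $\nu_1$ to be unital for $\eta \colon S \to R$, so that $M_0$ and $M_1$ are right $R$-modules and $f$ is a right $R$-module map.

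The associativity axiom $\nu \circ (\nu \boxprod 1_j) = \nu \circ (1_f \boxprod \mu)$ then forces the remaining compatibilities. As in the associativity argument for the pushout product monoidal structure itself, the common domain of the iterated pushout product $f \boxprod j \boxprod j$ is the colimit of $M_i \otimes X \otimes Y$ with $X,Y \in \{I,R\}$ taken over the cube corners excluding $(M_1,R,R)$; restricting to each face yields in turn associativity of the actions $\alpha$ and $\nu_1$, right $R$-linearity of $\phi$ (i.e.\ $\alpha \circ (\phi \otimes 1) = \phi \circ (1 \otimes \mu^r_I)$), and the left-balancing identity $\phi \circ (\nu_1 \otimes 1) = \phi \circ (1 \otimes \mu^\ell_I)$. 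These two balancing identities allow $\phi$ to factor through a map $M_1 \otimes_R I \to M_0$ of right $R$-modules. With this descent in hand, the gluing identity from the pushout becomes precisely the first commutative diagram of the statement (after identifying $M_0 \otimes_R R \cong M_0$), and the square commutativity $f \circ \phi = \nu_1 \circ (1 \otimes j)$ becomes the second (after identifying $M_1 \otimes_R R \cong M_1$).

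For the converse I would reverse this analysis: given $f$ and $\phi$ as in the statement, take $\nu_1$ and $\alpha$ to be the given right $R$-actions and $\phi$ as supplied, observe that the first diagram of the proposition is exactly the gluing condition needed to invoke the pushout universal property in the construction of $\nu_0$, and verify the unit and associativity axioms by running the face-by-face decomposition backwards. The main obstacle is purely bookkeeping in the associativity step, where the iterated pushout cube has seven non-terminal corners and one must keep careful track of which face is responsible for each of the four compatibilities above; no single compatibility is conceptually deep, but a clean write-up benefits from an explicit labeling convention for the cube rather than prose.
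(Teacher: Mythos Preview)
Your proposal is correct and follows essentially the same approach as the paper's proof: both unwind the action map $f\boxprod j\to f$ through the pushout description of $f\boxprod j$, extract the multiplications $\mu_{0},\mu_{1}$ and the map $\phi_{0}\colon M_{1}\otimes I\to M_{0}$ together with their compatibilities, and then use the associativity and unit diagrams to deduce associativity, unitality, $R$-linearity of $\phi$, and the descent of $\phi_{0}$ to $M_{1}\otimes_{R}I$. Your write-up is somewhat more explicit about the individual balancing identities and the converse direction, but the underlying argument is the same.
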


The result for left $R$-modules is similar; this time we have maps
$f\mathcolon M_{0}\xrightarrow{}M_{1}$ and $\phi \mathcolon
I\otimes_{R}M_{1}\xrightarrow{} M_{0}$ of left $R$-modules making
analogous diagrams commute.

\begin{proof}
Let $f\mathcolon M_{0}\xrightarrow{}M_{1}$ denote an object in $\Arr
\cat{C}$.  A right $j$-module structure on $f$ is an action map $\mu
\mathcolon f\boxprod j\xrightarrow{}f$ making associativity and unit
diagrams commute.  Since we have
\[
f\boxprod j\mathcolon (M_{0}\otimes R) \amalg_{M_{0}\otimes I}
(M_{1}\otimes I) \xrightarrow{} M_{1} \otimes R,
\]
the map $\mu$ is equivalent to right multiplications $\mu_{0},\mu_{1}$
of $R$ on $M_{0}$ and $M_{1}$ that are preserved by $f$ and a map
$\phi_{0}\mathcolon M_{1}\otimes I\xrightarrow{}M_{0}$ such that
$f\phi_{0}=\mu_{1} (1\otimes j)$ and $\phi_{0} (f\otimes 1)=\mu_{0}
(1\otimes j)$.  Careful consideration of the associativity diagram
shows that $\mu$ is associative when $\mu_{0}$ and $\mu_{1}$ are
associative, when $\phi_{0}$ descends to $\phi \mathcolon
M_{1}\otimes_{R}I\xrightarrow{}M_{0}$, and when $\phi$ is a right
$R$-module map.  Of course, the unit diagram commutes exactly when the
action of $R$ on $M_{0}$ and $M_{1}$ is unital.
\end{proof}

A map of $(R,I)$-modules $M\xrightarrow{}N$ is of course a pair of
right $R$-module maps $M_{0}\xrightarrow{}N_{0}$ and
$M_{1}\xrightarrow{}N_{1}$ making the evident diagrams involving $f$
and $\phi$ commute.

Whenever we have monoids and modules, we also have extension and
restriction of scalars.  It is useful to unwind these definitions as
well. For the tensor product monoidal structure, this is simple.
Suppose $\alpha\mathcolon p\xrightarrow{}p'$ is a map of monoid
homomorphisms, where $p\mathcolon R_{0}\xrightarrow{}R_{1}$ and
$p'\mathcolon R_{0}'\xrightarrow{}R_{1}'$.  If $f\mathcolon
M_{0}\xrightarrow{}M_{1}$ is a $p$-module, then the extension of
scalars functor sends to $f$ to
\[
f\otimes_{p}p'\mathcolon M_{0}\otimes_{R_{0}}R_{0}' \xrightarrow{}
M_{1}\otimes_{R_{1}}R_{1}'.  
\]
The restriction of scalars functor sends $f$ to itself, as usual.  

For the pushout product monoidal structure, life is a bit more
complicated.    

\begin{proposition}\label{prop-extension}
Suppose $\alpha \mathcolon j\xrightarrow{}j'$ is a map of Smith ideals
in a closed symmetric monoidal category $\cat{C}$, where $j\mathcolon
I\xrightarrow{}R$ and $j'\mathcolon I'\xrightarrow{}R'$.  Let $U$
denote the restriction of scalars functor from $j'$-modules to
$j$-modules.  If $N$ is a $j'$-module, then $(UN)_{0}=N_{0}$,
$(UN)_{1}=N_{1}$, and $f_{UN}=f_{N}$, where $N_{0}$ and $N_{1}$ are
$R$-modules via restriction of scalars, and $\phi_{UN}$ is the
composite
\[
N_{1} \otimes_{R} I \xrightarrow{1\otimes \alpha } N_{1} \otimes_{R'}
I' \xrightarrow{\phi_{N}} N_{0}. 
\]
If $M$ is a $j$-module, then the extension of scalars
$M\boxprod_{j}j'$ has 
$(M\boxprod_{j}j')_{1}=M_{1}\otimes_{R}R'$, and
$(M\boxprod_{j}j')_{0}$ is the pushout in the diagram below.
\[
\begin{CD}
(M_{0}\otimes_{R}I') \amalg (M_{1}\otimes_{R} I \otimes_{R}R') @>>>
M_{0}\otimes_{R}R' \\
@VVV @VVV \\
M_{1} \otimes_{R} I' @>>\phi_{M\boxprod_{j}j'}> (M\boxprod_{j}j')_{0}
\end{CD}
\]
The map $f_{M\boxprod_{j}j'}$ is the evident one, induced by
$f_{M}\otimes 1$ and $1\otimes j'$.  
\end{proposition}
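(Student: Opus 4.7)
The strategy is to treat the two halves of the proposition separately and rely on the explicit descriptions of $(R,I)$-modules from Proposition~\ref{prop-module}.

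For restriction of scalars, the claim is essentially a definition chase. Given a $j'$-action $\mu_N\colon N\boxprod j'\to N$, the $j$-action on $UN$ is forced to be the composite
\[
UN\boxprod j \xrightarrow{\ 1\boxprod \alpha\ } N\boxprod j' \xrightarrow{\ \mu_N\ } N.
\]
Unwinding both levels of this composite via Proposition~\ref{prop-module}, the level-$1$ component is just $f_N$, the underlying right $R$-module structures on $N_0$ and $N_1$ are obtained by restricting the $R'$-actions along $\alpha_1\colon R\to R'$, and the level-$0$ component reads off as $\phi_N\circ (1\otimes \alpha_0)\colon N_1\otimes_R I\to N_0$, which is exactly the stated formula for $\phi_{UN}$.

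For extension of scalars, I would use the standard reflexive coequalizer presentation
\[
M\boxprod_j j' \;=\; \mathrm{coeq}\bigl( M\boxprod j\boxprod j' \rightrightarrows M\boxprod j'\bigr),
\]
where the two parallel maps come from the right $j$-action on $M$ and from the composite $j\xrightarrow{\alpha}j'$ followed by the left multiplication on $j'$. Since colimits in $\Arr\cat{C}$ are computed objectwise, evaluating at $1$ immediately identifies $(M\boxprod_j j')_1$ with the ordinary extension of scalars $M_1\otimes_R R'$.

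The main work, and the place where I expect the bookkeeping to get fussiest, is identifying $(M\boxprod_j j')_0$. The domain of $M\boxprod j'$ is already the pushout $(M_0\otimes R')\amalg_{M_0\otimes I'}(M_1\otimes I')$, while the domain of $M\boxprod j\boxprod j'$ is the seven-corner colimit of the punctured cube with vertices $M_i\otimes A_j\otimes B_k$ (where $A_0=I,A_1=R,B_0=I',B_1=R'$) omitting the top corner. Taking the levelwise coequalizer in $\cat{C}$ and collapsing the two families of relations, those coming from the right $R$-action on $M_0,M_1$ and those coming from the left $R$-action on $R',I'$ via $\alpha$, produces precisely the pushout in the statement: the $M_0\otimes_R R'$ vertex absorbs the $R$-relations on the ``$R'$ piece'' of $M\boxprod j'$, the $M_1\otimes_R I'$ vertex absorbs them on the ``$I'$ piece'', and these are glued along the image of $(M_0\otimes_R I')\amalg (M_1\otimes_R I\otimes_R R')$. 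The formula for $f_{M\boxprod_j j'}$, and the check that this data genuinely assembles into a $j'$-module, then follow from naturality, the monoid axioms for $j'$, and the fact that $\alpha$ is a map of Smith ideals; the adjunction with the restriction functor $U$ computed in the first half provides a useful cross-check.
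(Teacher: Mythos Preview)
Your proposal is correct and follows essentially the same route as the paper: treat restriction as a definition chase (the paper dismisses it with ``the only thing we need to prove is the description of $M\boxprod_{j}j'$''), then compute the extension of scalars by applying $\ev_{0}$ and $\ev_{1}$ to the coequalizer $M\boxprod j\boxprod j' \rightrightarrows M\boxprod j'$ and reading off which pieces of $\ev_{0}(M\boxprod j\boxprod j')$ impose the $\otimes_{R}$ relations and which survive as the extra gluing datum. Your punctured-cube description of $\ev_{0}(M\boxprod j\boxprod j')$ is slightly more explicit than the paper's shorthand (``a pushout of the three objects $M_{0}\otimes R\otimes R'$, $M_{1}\otimes I\otimes R'$, and $M_{1}\otimes R\otimes I'$''), but the identification of the three roles those objects play is identical.
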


The corestriction $\Hom_{\boxprod ,j} (j',M)$ has a dual description,
using a pullback instead of a pushout, but we leave the details to the
reader.  

\begin{proof}
The only thing we need to prove is the description of
$M\boxprod_{j}j'$, which by definition is the coequalizer of the two
maps
\[
M\boxprod j\boxprod j' \rightrightarrows M\boxprod j'.  
\]
Both $\ev_{0}$ and $\ev_{1}$ preserve coequalizers, and it follows
easily that $\ev_{1} (M\boxprod_{j}j')=M_{1}\otimes_{R}R'$.  We have 
\[
\ev_{0} (M\boxprod j')= (M_{0}\otimes R')\amalg_{M_{0}\otimes I'}
(M_{1}\otimes I'),
\]
whereas $\ev_{0} (M\boxprod j\boxprod j')$ is a pushout of the three
objects 
\[
M_{0}\otimes R\otimes R', M_{1}\otimes I\otimes R', \text{ and }
M_{1}\otimes R\otimes I'.
\]
Upon taking the coequalizer, the first of these objects converts
$M_{0}\otimes R'$ to $M_{0}\otimes_{R}R'$, the second appears in our
description (suitably tensored over $R$), and the third converts
$M_{1}\otimes I'$ to $M_{1}\otimes_{R}I'$.
\end{proof}

We now return to the cokernel functor, which we recall from
Theorem~\ref{thm-cokernel-monoidal} is a symmetric monoidal functor
from the pushout product monoidal structure to the tensor product
monoidal structure.  It therefore preserves monoids and modules.  More
precisely, we have the following theorem.

\begin{theorem}\label{thm-Smith-cokernel}
Suppose $\cat{C}$ is a pointed closed symmetric monoidal category.
The cokernel induces a functor from Smith ideals in $\cat{C}$ to
monoid homomorphisms in $\cat{C}$ whose right adjoint is the kernel.
That is, if $j\mathcolon I\xrightarrow{}R$ is a Smith ideal, then the
cokernel $R\xrightarrow{}R/I$ is canonically a monoid
homomorphism. Furthermore, the cokernel also induces a functor from
modules over the Smith ideal $j$ to modules over the monoid
homomorphism $\coker j$ whose right adjoint is the kernel.
\end{theorem}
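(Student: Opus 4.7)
The plan is to deduce this theorem as a formal consequence of Theorem~\ref{thm-cokernel-monoidal}. That theorem establishes that $\coker$ is a strong symmetric monoidal left adjoint from the pushout product monoidal structure on $\Arr \cat{C}$ to the tensor product monoidal structure, with right adjoint $\ker$. Everything else reduces to the general principle that a strong symmetric monoidal adjunction induces adjunctions on categories of monoids and, more generally, on categories of modules.

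First, I would invoke doctrinal adjunction: since $\coker$ is strong symmetric monoidal and left adjoint to $\ker$, the kernel inherits a canonical lax symmetric monoidal structure for which the unit $\eta\mathcolon 1 \to \ker \circ \coker$ and counit $\varepsilon\mathcolon \coker \circ \ker \to 1$ of the adjunction become monoidal natural transformations. Explicitly, the lax structure map $\ker X \boxprod \ker Y \to \ker(X \otimes Y)$ is the transpose under the adjunction of the composite $\coker(\ker X \boxprod \ker Y) \xrightarrow{\cong} \coker \ker X \otimes \coker \ker Y \xrightarrow{\varepsilon \otimes \varepsilon} X \otimes Y$, using the strong monoidal structure of $\coker$ from Theorem~\ref{thm-cokernel-monoidal}.

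Next, since strong (respectively lax) symmetric monoidal functors preserve monoids and monoid homomorphisms, $\coker$ sends a Smith ideal (a monoid in the pushout product structure, by Definition~\ref{defn-Smith}) to a monoid in the tensor product structure, which by Proposition~\ref{prop-tensor-monoid} is the same data as a monoid homomorphism in $\cat{C}$. Likewise $\ker$ sends a monoid homomorphism to a Smith ideal. That the resulting functors are adjoint follows because, for a Smith ideal $j$ and a monoid homomorphism $p$, the adjunction bijection $\Hom_{\Arr \cat{C}}(\coker j, p) \cong \Hom_{\Arr \cat{C}}(j, \ker p)$ restricts to morphisms compatible with the monoid structures precisely because $\eta$ and $\varepsilon$ are monoidal natural transformations; this is a routine diagram chase verifying that a morphism of monoid homomorphisms $\coker j \to p$ corresponds, under the adjunction, to a morphism $j \to \ker p$ of Smith ideals.

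For modules, the same principle applies one level deeper. Given a right $j$-module $M$ with action $M \boxprod j \to M$, applying $\coker$ and using its strong monoidal structure yields an action $\coker M \otimes \coker j \to \coker M$, giving $\coker M$ the structure of a right $\coker j$-module. Conversely, given a right $\coker j$-module $N$ with action $N \otimes \coker j \to N$, the lax monoidal structure on $\ker$ produces a map $\ker N \boxprod \ker \coker j \to \ker(N \otimes \coker j) \to \ker N$, and restriction along the unit $j \to \ker \coker j$ (which is a map of Smith ideals by the previous paragraph) makes $\ker N$ into a right $j$-module. That these are adjoint functors is again a formal consequence of the fact that $\eta$ and $\varepsilon$ are monoidal. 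The main obstacle I anticipate is not conceptual but bookkeeping: one must carefully write out the lax coherence maps for $\ker$ and check that the action diagrams transport correctly. Once the general machinery of monoidal adjunctions is in place, however, the theorem falls out; in the actual write-up I would either cite this principle directly or sketch the two or three diagram chases that establish it.
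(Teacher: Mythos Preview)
Your proposal is correct and follows essentially the same argument as the paper: both deduce the result formally from Theorem~\ref{thm-cokernel-monoidal} by noting that the right adjoint of a strong symmetric monoidal functor is lax symmetric monoidal, even writing out the lax structure map on $\ker$ in the same way. Your explicit invocation of doctrinal adjunction and the monoidality of $\eta$ and $\varepsilon$ makes the adjunction at the level of monoids and modules slightly more transparent than the paper's sketch, but the content is the same.
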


\begin{proof}
This is an immediate corollary of
Theorem~\ref{thm-cokernel-monoidal}.  Since the cokernel is (strongly)
symmetric monoidal, its right adjoint, the kernel, is lax symmetric
monoidal.  That is, there is a natural map 
\[
\ker f \boxprod  \ker g \xrightarrow{} \ker (f\otimes g)
\]
adjoint to the map 
\[
\coker (\ker f \boxprod \ker g) \cong \coker \ker f \otimes \coker
\ker g \xrightarrow{} f \otimes g,
\]
making all the usual coherence diagrams commute.  This makes the
kernel functor pass to a functor of monoids, where it is right adjoint
to the cokernel as a functor of monoids.  It also means the kernel
defines a functor from $\coker j$-modules to $j$-modules that is right
adjoint to the cokernel as well.  This too is standard, but we remind
the reader that if $f$ is a $\coker j$-module, then $\ker f$ is a
$j$-module via the multiplication 
\[
j\boxprod \ker f \xrightarrow{} \ker \coker j \boxprod \ker f
\xrightarrow{}\ker (\coker j\otimes f) \xrightarrow{} \ker f.  
\]
This completes the proof.  
\end{proof}

\section{The injective model structure on the arrow
category}\label{sec-model-injective}

In this section, we suppose that $\cat{C}$ is a closed symmetric
monoidal model category in the sense
of~\cite[Definition~4.2.6]{hovey-model}.  In this case, we would like
$\Arr \cat{C}$ to be a closed symmetric monoidal model category as
well, but we need two different model structures.  The model structure
compatible with the pushout product is the \textbf{projective model
structure}, where a map in $\Arr \cat{C}$ is a weak equivalence or
fibration if and only if its components are so in $\cat{C}$.  The
model structure compatible with the tensor product monoidal structure
is the \textbf{injective model structure}, where a map in $\Arr
\cat{C}$ is a weak equivalence or cofibration if and only if its
components are so in $\Arr \cat{C}$.  In this section, we construct
the injective model structure and establish the basic properties of
it that we need.  The main goal is to prove there is a good theory of
monoids and modules over them. 

\begin{theorem}\label{thm-injective-model}
Suppose $\cat{C}$ is a model category.  Then
there is a model structure on $\Arr \cat{C}$, called the
\textbf{injective model structure}, with the following properties\uc 
\begin{enumerate}
\item A map $\alpha$ in $\Arr \cat{C}$ is a weak equivalence \ulp
resp. cofibration\urp if and only if $\ev_{0}\alpha$ and
$\ev_{1}\alpha$ are weak equivalences \ulp resp. cofibrations\urp in
$\cat{C}$\usc
\item A map $\alpha\mathcolon f\xrightarrow{}g$ is a \ulp trivial\urp
fibration if and only if the maps $\ev_{1}\alpha $ and
\[
\ev_{0}f \xrightarrow{(f,\ev_{0}\alpha )} \ev_{1}f \times_{\ev_{1}g}
\ev_{0}g
\]
are \ulp trivial\urp fibrations in $\cat{C}$.  In particular, if
$\alpha$ is a fibration in $\Arr \cat{C}$, then both $\ev_{0}\alpha$
and $\ev_{1}\alpha$ are fibrations in $\cat{C}$.  
\item The functors $L_{0},L_{1}\mathcolon \cat{C}\xrightarrow{}\Arr
\cat{C}$ are left Quillen functors, as are $\ev_{0}$ and $\ev_{1}$.
\item If $\cat{C}$ is a symmetric monoidal model category, the tensor
product monoidal structure and the injective model structure on $\Arr
\cat{C}$ make $\Arr \cat{C}$ into a symmetric monoidal model category.
\end{enumerate}
\end{theorem}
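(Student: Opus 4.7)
The plan is to identify the injective model structure as the Reedy model structure on the diagram category $\Arr \cat{C} = \cat{C}^{\cat{J}}$, where $\cat{J}$ is the category $(0\to 1)$ given the Reedy structure of a pure \emph{inverse} category by setting $\deg(0)=1$ and $\deg(1)=0$. Existence then follows from the standard Reedy theorem (see \cite{hovey-model}), and parts (1) and (2) will come out of a direct computation of the latching and matching objects.

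For part (1), every object of $\cat{J}$ has empty latching category (the direct subcategory contains only identities), so each latching object $L_j X$ is initial and the relative latching map is just the levelwise map $X_j \to Y_j$. Hence Reedy cofibrations are precisely the levelwise cofibrations, and Reedy weak equivalences always coincide with levelwise weak equivalences. For part (2), the matching category at $1$ is empty, so $M_1 X$ is terminal and the matching condition at $1$ is simply that $\ev_1 \alpha$ is a fibration. The matching category at $0$ has one object (the map $0\to 1$), giving $M_0 X = X_1$ with matching map $f\mathcolon X_0 \to X_1$; the relative matching map at $0$ is then exactly $\ev_0 f \to \ev_1 f \times_{\ev_1 g}\ev_0 g$. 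That $\ev_0 \alpha$ is itself a fibration whenever $\alpha$ is one then follows by pulling back along $\ev_1 \alpha$.

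Part (3) is immediate: each of $L_0, L_1, \ev_0, \ev_1$ has a right adjoint by Lemma~\ref{lem-eval}, so it suffices to check preservation of (trivial) cofibrations. For a map $i$ in $\cat{C}$, the components of $L_0 i$ are both $i$, and the components of $L_1 i$ are the identity of $0$ together with $i$; these are (trivial) cofibrations whenever $i$ is. For $\ev_0$ and $\ev_1$, preservation is immediate from the levelwise characterization of part (1).

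For part (4), the tensor product monoidal structure is defined componentwise, so the pushout product of two maps in $\Arr \cat{C}$ with respect to this structure is, at $\ev_0$ and $\ev_1$ separately, the ordinary pushout product in $\cat{C}$. The pushout product axiom in $\Arr \cat{C}$ therefore reduces to the pushout product axiom in $\cat{C}$ at each level. The unit axiom is similar: a cofibrant replacement $QS \to S$ in $\cat{C}$ induces a levelwise cofibrant replacement $L_0 QS \to L_0 S$ in $\Arr \cat{C}$, and tensoring with a cofibrant object reduces, levelwise, to the unit axiom in $\cat{C}$. I expect the main technical step is the matching-object computation in part (2): once one correctly identifies $M_0 X$ with $X_1$ and unwinds the relative matching map, the remaining parts of the theorem are essentially formal.
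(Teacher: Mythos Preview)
Your proposal is correct and follows essentially the same route as the paper: both identify the injective model structure as the inverse (Reedy) model structure on $\cat{C}^{\cat{J}}$ with $\cat{J}$ degree-lowering, derive (1) and (2) from the latching/matching description, and check (4) componentwise via the pushout product axiom in $\cat{C}$ together with $L_0 QS = 1_{QS}$ for the unit. The only cosmetic difference is that the paper cites the inverse-category theorem directly rather than computing the latching and matching objects by hand.
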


The injective model structure on $\Arr \cat{C}$ can now be
used as input into Theorem~\ref{thm-injective-model} to obtain a
doubly injective model structure on $\Arr \Arr \cat{C}$, if desired.  

\begin{proof}
We think of the category $\cat{J}$ with two objects and one
non-identity map as an inverse category in the sense
of~\cite[Definition~5.1.1]{hovey-model}.
Then~\cite[Theorem~5.1.3]{hovey-model} implies parts~(1) and~(2),
since $\Arr \cat{C}$ is the category of $\cat{J}$-diagrams in
$\cat{C}$.  We also need the dual of~\cite[Remark~5.1.7]{hovey-model}
to see that if $\alpha$ is a fibration, then $\ev_{0}\alpha $ is a
fibration in $\cat{C}$.  Part~(3) follows, since the functors
$\ev_{i}$ preserve weak equivalences, fibrations, and cofibrations.  

For part~(4), suppose that $\alpha \mathcolon f\xrightarrow{}g$ and
$\beta \mathcolon f'\xrightarrow{}g'$ are cofibrations in the
injective model structure.  We must show that the map 
\[
(f\otimes g') \amalg_{f\otimes f'} (f'\otimes g) \xrightarrow{}
g\otimes g'
\]
is a cofibration, which is trivial if either $\alpha$ or $\beta$ is.
But the components of this map are precisely 
\[
\ev_{0}\alpha \boxprod \ev_{0}\beta \text{ and } \ev_{1}\alpha
\boxprod \ev_{1}\beta
\]
and these are guaranteed to be cofibrations, trivial if either
$\alpha$ or $\beta$ is so, by the fact that $\cat{C}$ is a monoidal
model category.  We recall that there is also a unit condition; we
need to know that the map $Q1_{S}\otimes \alpha \xrightarrow{}\alpha$
is a weak equivalence for all cofibrant $\alpha$, where $Q1_{S}$ is a
cofibrant replacement of the unit $1_{S}$ of the tensor product
monoidal structure.  In fact, if $QS$ is a cofibrant replacement of
$S$ in $\cat{C}$, then $1_{QS}$ is a cofibrant replacement of $1_{S}$
in the injective model structure.  Hence the unit axiom for $\Arr
\cat{C}$ follows from the unit axiom for $\cat{C}$.  
\end{proof}

To have a really good theory of monoids and modules over them in a
symmetric monoidal model category, though, we need to know a bit more
about the model structure.  The monoid
axiom~\cite[Definition~2.2]{schwede-shipley-monoids} guarantees that
there is an induced model structure on monoids and on modules over
them.

\begin{proposition}\label{prop-injective-model-monoid}
Let $\cat{C}$ be a model category, and give $\Arr
\cat{C}$ the injective model structure.
\begin{enumerate}
\item If $\cat{C}$ is cofibrantly generated, so is $\Arr \cat{C}$.  
\item If $\cat{C}$ is a symmetric monoidal model category and
satisfies the monoid axiom
of~\cite[Definition~2.2]{schwede-shipley-monoids}, so does $\Arr
\cat{C}$.
\end{enumerate}
\end{proposition}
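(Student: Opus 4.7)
The plan is to give explicit generating sets for part~(1) and then verify the monoid axiom objectwise for part~(2). Let $I$ and $J$ generate the cofibrations and trivial cofibrations of $\cat{C}$. For each $i\mathcolon A\to B$ in $I$, define $\beta_{i}\mathcolon i\to L_{0}B$ to be the map in $\Arr\cat{C}$ whose $0$-component is $i$ itself and whose $1$-component is $1_{B}$; both components are cofibrations in $\cat{C}$, so $\beta_{i}$ is an injective cofibration. Set $I_{\Arr}=L_{1}I\cup\{\beta_{i}:i\in I\}$, and define $J_{\Arr}$ analogously from $J$.

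The crucial verification is that $I_{\Arr}$-inj coincides with the class of trivial fibrations described in Theorem~\ref{thm-injective-model}(2). By adjunction, $\alpha$ has the RLP with respect to $L_{1}i$ iff $\ev_{1}\alpha$ has the RLP with respect to $i$, so requiring this for all $i\in I$ amounts to $\ev_{1}\alpha$ being a trivial fibration. Unpacking the lifting problem shows that $\alpha\mathcolon f\to g$ has the RLP with respect to $\beta_{i}$ iff the corner map $\ev_{0}f\to\ev_{1}f\times_{\ev_{1}g}\ev_{0}g$ has the RLP with respect to $i$. Together these are precisely the defining conditions of a trivial fibration in Theorem~\ref{thm-injective-model}(2); the analogous argument with $J$ gives fibrations. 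Smallness of the domains reduces to smallness in $\cat{C}$, since $\Hom_{\Arr\cat{C}}(L_{1}A,-)\cong\Hom_{\cat{C}}(A,\ev_{1}-)$ and $\Hom_{\Arr\cat{C}}((A\to B),-)$ is a finite limit of $\Hom$-sets involving $\ev_{0}$ and $\ev_{1}$, both of which preserve filtered colimits. The standard recognition principle then yields that $(I_{\Arr},J_{\Arr})$ cofibrantly generates the injective model structure.

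For part~(2), I would use the fact that colimits in $\Arr\cat{C}$ are computed levelwise to reduce the monoid axiom to the monoid axiom for $\cat{C}$. By Lemma~\ref{lem-eval-tensor}, $L_{1}j\otimes f=L_{1}(j\otimes\ev_{1}f)$, whose $0$-component is an identity and whose $1$-component is $j\otimes\ev_{1}f\in J\otimes\cat{C}$. A direct calculation gives $\beta_{j}\otimes f$ with $0$-component $j\otimes\ev_{0}f\in J\otimes\cat{C}$ and $1$-component an identity. Thus every generator $\alpha\otimes f$ with $\alpha\in J_{\Arr}$ has, on each level, either an identity or a map in $J\otimes\cat{C}$. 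Since $\ev_{0}$ and $\ev_{1}$ commute with transfinite composition, pushouts, and coproducts, applying $\ev_{0}$ or $\ev_{1}$ to a map in $(J_{\Arr}\otimes\Arr\cat{C})$-cell yields a transfinite composition of pushouts of coproducts of maps in $J\otimes\cat{C}$ (with identities absorbed harmlessly). The monoid axiom for $\cat{C}$ guarantees both components are weak equivalences in $\cat{C}$, so by Theorem~\ref{thm-injective-model}(1) the original map is a weak equivalence in the injective model structure.

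The main obstacle is the choice of generators in part~(1): the naive choice $L_{0}I\cup L_{1}I$ only characterizes maps $\alpha$ such that $\ev_{0}\alpha$ and $\ev_{1}\alpha$ are separately trivial fibrations, which is strictly weaker than the corner-map condition of Theorem~\ref{thm-injective-model}(2). Introducing $\beta_{i}$, whose RLP precisely encodes the corner condition, is the key insight; once the generators are in place, both parts proceed by routine bookkeeping.
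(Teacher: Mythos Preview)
Your proof is correct and essentially identical to the paper's. In part~(1) your generator $\beta_{i}\colon i\to L_{0}B$ is literally the paper's $\alpha_{i}\colon i\to U_{1}\ev_{1}i$, since $L_{0}=U_{1}$ by Lemma~\ref{lem-eval}; the lifting analysis and smallness check match. For part~(2) the paper takes a marginally cleaner route: rather than computing $\ev_{i}$ on each generator $L_{1}j\otimes f$ and $\beta_{j}\otimes f$ individually, it observes that $\ev_{i}$ is strict monoidal for the tensor product structure, so $\ev_{i}(\alpha\otimes g)=\ev_{i}\alpha\otimes\ev_{i}g$ for \emph{any} trivial cofibration $\alpha$ in the injective structure, and $\ev_{i}\alpha$ is automatically a trivial cofibration in $\cat{C}$ by Theorem~\ref{thm-injective-model}(1). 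This dispenses with the case split and the appeal to \cite[Lemma~2.3]{schwede-shipley-monoids}, but the underlying mechanism---reduce to $\cat{C}$ via the levelwise, colimit-preserving $\ev_{i}$---is the same.
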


\begin{proof}
Suppose $\cat{C}$ is cofibrantly generated, with generating
cofibrations $I$ and generating trivial cofibrations $J$. Let
$I'\subseteq \mor \Arr \cat{C}$ consist of the maps $L_{1}i$ for $i\in
I$ and
\[
\alpha_{i}\mathcolon i\xrightarrow{} U_{1}\ev_{1}i
\]  
for $i\in I$.  If $i\mathcolon A\xrightarrow{}B$, then $\alpha_{i}$ is
the map
\[
\begin{CD}
A @>i>> B \\
@ViVV @| \\
B @= B.
\end{CD}
\]
in $\Arr \cat{C}$.  Define $J'$ similarly using $J$.  Then one can
check that $\beta$ has the right lifting property with respect to
$L_{1}i$ if and only if $\ev_{1}\beta$ has the right lifting property
with respect to $i$, and $\beta \mathcolon f\xrightarrow{}g$ has the
right lifting property with respect to $\alpha_{i}$ if and only if
\[
\ev_{0}f \xrightarrow{} \ev_{1}f \times_{\ev_{1}g} \ev_{0}g
\]
has the right lifting property with respect to $i$.  Therefore the
maps $I'$ and $J'$ will serve as generating cofibrations and
generating trivial cofibrations for the injective model structure.
One must also check that the domains and codomains of $I'$ and $J'$
are small in $\Arr \cat{C}$, but this follows from smallness in
$\cat{C}$.  

The monoid axiom says that transfinite compositions of pushouts of
maps of the form $f\otimes A$, where $f$ is a trivial cofibration in
$\cat{C}$ and $A$ is an object of $\cat{C}$, are weak equivalences.
So in $\Arr \cat{C}$ we must check that transfinite compositions
$\beta$ of pushouts of maps of the form $\alpha \otimes g$, where
$\alpha$ is a trivial cofibration and $g$ is an object of $\Arr
\cat{C}$, are weak equivalences.  Since $\ev_{i}$ commutes with
transfinite compositions and pushouts for $i=0,1$, $\ev_{i}\beta$ is a
transfinite composition of pushouts of maps of the form
\[
\ev_{i} (\alpha \otimes g) =\ev_{i}\alpha  \otimes \ev_{i}g.
\]
Since $\ev_{i}\alpha$ is a trivial cofibration in $\cat{C}$,
$\ev_{i}\beta$ is a weak equivalence by the monoid axiom in
$\cat{C}$.  Therefore, $\beta$ is a weak equivalence as required.  
\end{proof}

\begin{corollary}\label{cor-injective-monoids}
Suppoce $\cat{C}$ is a cofibrantly generated symmetric monoidal model
category satisfying the monoid axiom.  
\begin{enumerate}
\item There is a model structure on the category of monoid
homomorphisms in $\cat{C}$, in which a map $\alpha \mathcolon
f\xrightarrow{}f'$ is a weak equivalence or fibration if and only if
it is so in the injective model structure on $\Arr \cat{C}$.  In
particular, $\alpha$ is a weak equivalence if and only if
$\ev_{0}\alpha$ and $\ev_{1}\alpha$ are weak equivalences in
$\cat{C}$.  
\item Given a monoid homomorphism $f\mathcolon
R_{0}\xrightarrow{}R_{1}$, there is a model structure on the category
of $f$-modules in which $\alpha$ is a weak equivalence or fibration if
and only if it is so in the injective model structure on $\Arr
\cat{C}$.  In particular, $\alpha$ is a weak equivalence if and only
if $\ev_{0}\alpha$ and $\ev_{1}\alpha$ are weak equivalences in
$\cat{C}$.
\end{enumerate}
\end{corollary}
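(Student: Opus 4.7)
The plan is to deduce this directly from the general transfer machinery of Schwede--Shipley, once we observe that Proposition~\ref{prop-injective-model-monoid} gives us all the needed hypotheses on $\Arr \cat{C}$ with the injective/tensor-product structure.

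First, I would invoke Proposition~\ref{prop-tensor-monoid}, which identifies monoids in $\Arr \cat{C}$ (with the tensor product monoidal structure) with monoid homomorphisms in $\cat{C}$, and modules over such a monoid $f$ with $f$-modules in the sense of that proposition. Thus the two model-category problems in~(1) and~(2) really are questions about monoids and modules in the symmetric monoidal category $(\Arr \cat{C}, \otimes)$.

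Next, Theorem~\ref{thm-injective-model}(4) and Proposition~\ref{prop-injective-model-monoid} together say that $\Arr \cat{C}$ with the tensor product monoidal structure and the injective model structure is a cofibrantly generated symmetric monoidal model category satisfying the monoid axiom. This is exactly the input needed for \cite[Theorem~4.1]{schwede-shipley-monoids}: applying it to $\Arr \cat{C}$ produces a transferred model structure on monoids in $\Arr \cat{C}$, and for each such monoid $f$ a transferred model structure on $f$-modules, in which fibrations and weak equivalences are created by the forgetful functor to $\Arr \cat{C}$. Translating through Proposition~\ref{prop-tensor-monoid}, this gives precisely the model structures asserted in~(1) and~(2).

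Finally, the characterization of the weak equivalences in terms of $\ev_{0}$ and $\ev_{1}$ follows immediately from Theorem~\ref{thm-injective-model}(1), since the forgetful functor to $\Arr \cat{C}$ lands in a model structure whose weak equivalences are detected levelwise. The only potential obstacle is the usual smallness hypothesis in the Schwede--Shipley theorem, but this is automatic: the domains and codomains of the generating (trivial) cofibrations $I', J'$ constructed in the proof of Proposition~\ref{prop-injective-model-monoid} inherit smallness from the corresponding objects in $\cat{C}$, so no extra work is required.
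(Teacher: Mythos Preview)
Your proof is correct and follows the same route as the paper: verify that $\Arr\cat{C}$ with the tensor product and injective model structure satisfies the hypotheses of the Schwede--Shipley transfer theorem, then apply it. The paper simply says the corollary follows immediately from~\cite[Theorem~3.1]{schwede-shipley-monoids}; you have spelled out the same reasoning in more detail (and cited the result as Theorem~4.1 rather than~3.1, which is just a version discrepancy in the Schwede--Shipley numbering).
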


This corollary follows immediately
from~\cite[Theorem~3.1]{schwede-shipley-monoids}.  Note that a monoid
homomorphism $f$ is fibrant if and only if it is a fibration of
fibrant objects in $\cat{C}$.  

But we would also like a weak equivalence of monoids to induce
a corresponding Quillen equivalence of the categories of modules, as
in~\cite[Theorem~3.3]{schwede-shipley-monoids}, so that the module
categories are homotopy invariant.  This requires a bit more. 

\begin{definition}\label{defn-flat}
Suppose $\cat{C}$ is a symmetric monoidal model category.  If $R$ is a
monoid in $\cat{C}$ and $M$ is a right $R$-module, we say that $M$ is
\textbf{flat} over $R$ if the functor $M\otimes_{R} (-)$ takes weak
equivalences of left $R$-modules to weak equivalences in $\cat{C}$.
Here a weak equivalence of left $R$-modules is an $R$-module map that
is a weak equivalence in $\cat{C}$.  We say that \textbf{cofibrant
modules are flat} in $\cat{C}$ if, for every monoid $R$, every
cofibrant right $R$-module $M$ is flat over $R$.
\end{definition}

Then Theorem~3.3 of~\cite{schwede-shipley-monoids} says that if
cofibrant modules are flat in $\cat{C}$, and $\cat{C}$ satisfies the
monoid axiom, then module categories are homotopy invariant.  

\begin{proposition}\label{prop-flat-injective}
Suppose $\cat{C}$ is cofibrantly generated and satisfies the monoid
axiom, and cofibrant modules are flat in $\cat{C}$.  Then cofibrant
modules are flat in the injective model structure on $\Arr \cat{C}$.
In particular, in this case a weak equivalence $\alpha \mathcolon
f\xrightarrow{}g$ of monoid homomorphisms induces a Quillen
equivalence of the corresponding model categories of $f$-modules and
$g$-modules.  
\end{proposition}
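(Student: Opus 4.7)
The plan is to reduce flatness over a monoid in $\Arr \cat{C}$ (with the tensor product monoidal structure, the one compatible with the injective model structure) to the componentwise flatness hypothesis in $\cat{C}$. So fix a monoid homomorphism $f\mathcolon R_{0}\xrightarrow{}R_{1}$ in $\cat{C}$, viewed as a monoid in $\Arr \cat{C}$ by Proposition~\ref{prop-tensor-monoid}, a cofibrant right $f$-module $M$, and a weak equivalence $\beta\mathcolon N\xrightarrow{}N'$ of left $f$-modules. The $f$-module $M\otimes_{f}N$ is the coequalizer of the two action maps $M\otimes f\otimes N \rightrightarrows M\otimes N$ in $\Arr \cat{C}$, and since $\otimes$ is componentwise in the tensor product monoidal structure and $\ev_{0},\ev_{1}$ preserve colimits, the components are precisely $M_{0}\otimes_{R_{0}}N_{0}$ and $M_{1}\otimes_{R_{1}}N_{1}$. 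It therefore suffices to prove that $M_{i}\otimes_{R_{i}}\beta_{i}$ is a weak equivalence in $\cat{C}$ for $i=0,1$.

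The components $\beta_{0},\beta_{1}$ are weak equivalences of left $R_{0}$- and $R_{1}$-modules respectively, by Corollary~\ref{cor-injective-monoids}(2) applied on the left. Thus, by the flatness hypothesis in $\cat{C}$, we are done as soon as we show that a cofibrant right $f$-module $M$ has $M_{0}$ cofibrant as a right $R_{0}$-module and $M_{1}$ cofibrant as a right $R_{1}$-module. For this I would use the transferred model structure: the generating cofibrations of the $f$-module category are the maps $\tilde{\imath}\otimes f$ where $\tilde{\imath}$ ranges over the generating cofibrations $L_{1}i$ and $\alpha_{i}$ of the injective model structure on $\Arr \cat{C}$ identified in Proposition~\ref{prop-injective-model-monoid}. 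Computing componentwise, the map $L_{1}i\otimes f$ has component $0$ equal to $0\xrightarrow{}0$ and component $1$ equal to $i\otimes R_{1}$, while $\alpha_{i}\otimes f$ has component $0$ equal to $i\otimes R_{0}$ and component $1$ equal to an identity. In every case the components are either identities or generating cofibrations of $R_{0}$- or $R_{1}$-modules in the sense of~\cite{schwede-shipley-monoids}.

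Because $\ev_{0}$ and $\ev_{1}$ preserve transfinite compositions, pushouts, and retracts, a cofibrant $f$-module $M$ (necessarily a retract of a cell object built from the initial $f$-module $0\xrightarrow{}0$) has $M_{0}$ a retract of a cell right $R_{0}$-module and $M_{1}$ a retract of a cell right $R_{1}$-module. Hence $M_{0}$ and $M_{1}$ are cofibrant, completing the proof that cofibrant modules are flat in the injective model structure on $\Arr \cat{C}$. The ``in particular'' conclusion is then immediate from~\cite[Theorem~3.3]{schwede-shipley-monoids}, applied in $\Arr \cat{C}$ with the injective model structure: the monoid axiom holds there by Proposition~\ref{prop-injective-model-monoid}(2) and cofibrant modules have just been shown flat, so a weak equivalence of monoid homomorphisms induces a Quillen equivalence between the corresponding module categories.

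The main obstacle is the bookkeeping in identifying the generating cofibrations of $f$-modules and verifying that their components are (up to identities) generating cofibrations of $R_{i}$-modules; the remaining flatness argument is then routine, and the Quillen equivalence statement is purely a citation.
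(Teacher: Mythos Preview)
Your argument is correct and follows the same overall strategy as the paper: reduce to componentwise flatness by showing $\ev_{i}(g\otimes_{f}\alpha)=\ev_{i}g\otimes_{R_{i}}\ev_{i}\alpha$ and that a cofibrant $f$-module has cofibrant $R_{i}$-module components. The only difference is in how you verify this last cofibrancy claim: you compute the components of the generating cofibrations $(L_{1}i)\otimes f$ and $\alpha_{i}\otimes f$ directly, whereas the paper packages the same fact as a lemma that $\ev_{0}$ and $\ev_{1}$ are left Quillen functors on module categories, checked by observing that their right adjoints $U_{0}$ and $U_{1}$ (with $U_{0}A=(A\to 0)$ and $U_{1}A=1_{A}$) preserve (trivial) fibrations in the injective model structure. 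The adjoint-functor route is a bit slicker and avoids the cell-complex bookkeeping, but both arguments are short and yield the same conclusion.
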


Before proving this proposition, we need a lemma. 

\begin{lemma}\label{lem-injective-modules}
Suppose $\cat{C}$ is cofibrantly generated and satisfies the monoid
axiom, and $f\mathcolon R_{0}\xrightarrow{}R_{1}$ is a monoid in the
tensor product monoidal structure.  Then $\ev_{0}\mathcolon f\Mod
\xrightarrow{}R_{0}\Mod$ and $\ev_{1}\mathcolon f\Mod
\xrightarrow{}R_{1}\Mod$ are left and right Quillen functors.
\end{lemma}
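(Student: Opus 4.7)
The plan is to write down the four one-sided adjoints of $\ev_{0}$ and $\ev_{1}$ at the module level and then verify the Quillen conditions by direct appeal to Corollary~\ref{cor-injective-monoids} and Theorem~\ref{thm-injective-model}. By Proposition~\ref{prop-tensor-monoid}, $f$ is simply a monoid map $R_{0}\xrightarrow{}R_{1}$ and an $f$-module is a triple $(M_{0},M_{1},h)$ with $M_{i}$ an $R_{i}$-module and $h\mathcolon M_{0}\xrightarrow{}M_{1}$ an $R_{0}$-linear map, where $M_{1}$ is regarded as an $R_{0}$-module by restriction of scalars along $f$.  From this the adjoints are immediate to read off: $\ev_{1}$ has left adjoint $L_{1}^{f}(N)= (0\xrightarrow{}N)$ and right adjoint $U_{1}^{f}(N)= (N\xrightarrow{=}N)$, while $\ev_{0}$ has left adjoint $L_{0}^{f}(N)= (N\xrightarrow{}N\otimes_{R_{0}}R_{1})$ given by the unit of the extension/restriction adjunction, and right adjoint $U_{0}^{f}(N)= (N\xrightarrow{}0)$.

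Next I would show that $\ev_{0}$ and $\ev_{1}$ are right Quillen.  By Corollary~\ref{cor-injective-monoids}(2), a weak equivalence or fibration in $f\Mod$ has underlying weak equivalence or fibration in the injective model structure on $\Arr \cat{C}$, and by Theorem~\ref{thm-injective-model}(1) together with the remark at the end of part~(2), both components of such a map are then weak equivalences or fibrations in $\cat{C}$.  Since weak equivalences and fibrations in $R_{i}\Mod$ are exactly those whose underlying map in $\cat{C}$ is a weak equivalence or fibration, the functors $\ev_{0}$ and $\ev_{1}$ preserve weak equivalences, fibrations, and trivial fibrations, hence are right Quillen.

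For the left Quillen property I would use the adjoint characterization and verify that $U_{1}^{f}$ and $U_{0}^{f}$ preserve fibrations and trivial fibrations.  For $U_{1}^{f}(g\mathcolon M\xrightarrow{}N)$ the underlying square in $\Arr \cat{C}$ has identity top and bottom arrows and both vertical arrows equal to $g$; Theorem~\ref{thm-injective-model}(2) then asks that $\ev_{1}$ of the square, which is $g$, be a fibration in $\cat{C}$ (true), and that the comparison map $M\xrightarrow{}M\times_{N}N=M$ be a fibration, which it is since it is the identity.  For $U_{0}^{f}(g)$ the underlying square has $\ev_{1}$-component $1_{0}$ and the comparison map $M\xrightarrow{}0\times_{0}N=N$ is precisely $g$, a fibration by hypothesis.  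The same reasoning with \emph{trivial} inserted throughout handles trivial fibrations, so both $U_{i}^{f}$ are right Quillen and hence both $\ev_{i}$ are left Quillen.

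The main obstacle, if any, is keeping the pullback comparison map of Theorem~\ref{thm-injective-model}(2) straight for each of the two right adjoints $U_{0}^{f}$ and $U_{1}^{f}$; beyond that the proof is a direct unwinding of the module-level definitions.
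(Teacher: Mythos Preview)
Your proof is correct and follows essentially the same approach as the paper's: you identify the four adjoints (the paper notes that $L_{0}$ must be modified to $L_{0}'(A)=A\xrightarrow{}R_{1}\otimes_{R_{0}}A$, exactly your $L_{0}^{f}$), verify $\ev_{i}$ are right Quillen because fibrations in $f\Mod$ have components which are fibrations in $\cat{C}$, and then check directly that $U_{0}^{f}$ and $U_{1}^{f}$ preserve (trivial) fibrations using the characterization in Theorem~\ref{thm-injective-model}(2). Your explicit computation of the pullback comparison maps for $U_{0}^{f}$ and $U_{1}^{f}$ is a bit more detailed than the paper's, which simply asserts these checks are easy, but the argument is the same.
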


\begin{proof}
Since $\ev_{0}$ and $\ev_{1}$ are strict monoidal as functors from the tensor
product monoidal structure to $\cat{C}$, they induce functors from the
module categories as desired, as do their right adjoints $U_{0}$ and
$U_{1}$.  The left adjoint $L_{1}$ is also strict monoidal and so
induces a functor $L_{1}\mathcolon R_{1}\Mod \xrightarrow{}f\Mod$ left
adjoint to $\ev_{1}$.  However, $L_{0}$ is not monoidal, and the left
adjoint of $\ev_{0}\mathcolon f\Mod \xrightarrow{}R_{0}\Mod $ is instead
$L_{0}'$, where $L_{0}' (A)$ is the map
$A\xrightarrow{}R_{1}\otimes_{R_{0}}A$.  

Certainly $\ev_{0}$ and $\ev_{1}$ preserve weak equivalences.  A
fibration of $f$-modules in particular has $\ev_{0}f$ and $\ev_{1}f$
fibrations in $\cat{C}$, and so also in $R_{0}\Mod$ and $R_{1}\Mod$,
respectively.  Thus $\ev_{0}$ and $\ev_{1}$ are right Quillen
functors.  To see they are also left Quillen functors, take a
(trivial) fibration $h$ of $R$-modules.  Since $U_{0}A$ is just the
map $A\xrightarrow{}0$, one can check easily that $U_{0}h$ is a
(trivial) fibration of $f$-modules.  Similarly, if $h$ is a (trivial)
fibration of $R'$-modules, use the fact that $U_{1}A=1_{A}$ to see
that $U_{1}h$ is a (trivial) fibration of $f$-modules.
\end{proof}

\begin{proof}[Proof of Proposition~\ref{prop-flat-injective}]
Suppose $f$ is a monoid in the tensor product monoidal structure, so
that $f\mathcolon R_{0}\xrightarrow{}R_{1}$ is a homomorphism of monoids in
$\cat{C}$.  Let $g$ be a cofibrant $f$-module.  Then $\ev_{0}g$ is a
cofibrant $R_{0}$-module and $\ev_{1}g$ is a cofibrant $R_{1}$-module by
Lemma~\ref{lem-injective-modules}.  Now suppose $\alpha$ is a weak
equivalence of left $f$-modules, so that $\ev_{0}\alpha$ is a weak
equivalence of $R_{0}$-modules and $\ev_{1}\alpha$ is a weak equivalence
of $R_{1}$-modules.  Then 
\[
\ev_{0} (g\otimes_{f}\alpha)=\ev_{0}g\otimes_{R_{0}}\ev_{0}\alpha
\text{ and } \ev_{1} (g\otimes_{f}\alpha) =
\ev_{1}g\otimes_{R_{1}}\ev_{1}\alpha
\]
so the result follows from the fact that cofibrant modules are flat in
$\cat{C}$.  
\end{proof}

\section{The projective model structure on the arrow
category}\label{sec-model-projective}

In this section, we establish the projective model structure on $\Arr
\cat{C}$, which is compatible with the pushout product monoidal
structure.  Just as in the previous section, we show that there is a
good theory of monoids (Smith ideals) and modules over them, although
stronger assumptions on $\cat{C}$ are needed to get a really good
theory. In fact, to check that cofibrant modules are flat in the
projective model structure is sufficiently complicated that we discuss
it in an appendix. 

\begin{theorem}\label{thm-projective-model}
Suppose $\cat{C}$ is a model category.  Then
there is a model structure on $\Arr \cat{C}$, called the
\textbf{projective model structure}, with the following properties\uc 
\begin{enumerate}
\item A map $\alpha$ in $\Arr \cat{C}$ is a weak equivalence \ulp
resp. fibration\urp if and only if $\ev_{0}\alpha$ and
$\ev_{1}\alpha$ are weak equivalences \ulp resp. fibrations\urp in
$\cat{C}$\usc
\item A map $\alpha\mathcolon f\xrightarrow{}g$ is a \ulp trivial\urp
cofibration if and only if the maps $\ev_{0}\alpha $ and
\[
\ev_{1}f \amalg_{\ev_{0}f} \ev_{0}g \xrightarrow{(\ev_{1}\alpha ,g)} \ev_{1}g
\]
are \ulp trivial\urp cofibrations in $\cat{C}$.  In particular, if
$\alpha$ is a cofibration in $\Arr \cat{C}$, then both $\ev_{0}\alpha$
and $\ev_{1}\alpha$ are cofibrations in $\cat{C}$.  
\item The functors $L_{0},L_{1}\xrightarrow{}\cat{C}\xrightarrow{}\Arr
\cat{C}$ are left Quillen functors, as are $\ev_{0}$ and $\ev_{1}$.
\item If $\cat{C}$ is cofibrantly generated, so is the projective
model structure on $\Arr \cat{C}$.  
\item If $\cat{C}$ is a cofibrantly generated symmetric monoidal model
category, the pushout product monoidal structure and the projective
model structure on $\Arr \cat{C}$ make $\Arr \cat{C}$ into a symmetric
monoidal model category.
\end{enumerate}
\end{theorem}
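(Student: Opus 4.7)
The plan is to view $\cat{J}$ as a direct Reedy category in the sense of~\cite[Definition~5.1.1]{hovey-model}, with $\deg(0)=0$ and $\deg(1)=1$, so that $\Arr \cat{C}=\cat{C}^{\cat{J}}$ inherits the projective Reedy model structure from~\cite[Theorem~5.1.3]{hovey-model}. The latching object at $0$ is $\emptyset$ and the latching object at $1$ applied to $F$ is $F(0)$, so the Reedy cofibration condition unwinds to exactly the two conditions in part~(2). Parts~(1) and~(2) thus follow from the cited theorem. For part~(3), each $\ev_i$ preserves weak equivalences and fibrations by~(1), so each $L_i$ is left Quillen; to see each $\ev_i$ is also left Quillen, it suffices to verify directly that its right adjoint $U_i$ sends (trivial) fibrations of $\cat{C}$ to (trivial) fibrations of $\Arr \cat{C}$, which is clear from the explicit formulas $U_0 A = (A\xrightarrow{}*)$ and $U_1 A = 1_A$.

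For part~(4), I would take as generating (trivial) cofibrations in $\Arr \cat{C}$ the set $\{L_0 i, L_1 i \suchthat i\in I\}$ (and analogously from $J$). The adjunction $L_c \dashv \ev_c$ identifies the right lifting property against $L_c i$ with the right lifting property of $\ev_c\beta$ against $i$, and by~(1) this identifies the class with (trivial) fibrations. Smallness of domains and codomains in $\Arr \cat{C}$ follows from smallness in $\cat{C}$ together with the fact that filtered colimits in $\Arr \cat{C}$ are computed objectwise.

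The main work is part~(5). Given cofibrations $\alpha\mathcolon f\xrightarrow{}g$ and $\beta\mathcolon f'\xrightarrow{}g'$ in the projective structure, I would apply the criterion~(2) to $\alpha\boxprod\beta$. Since $\ev_1$ preserves colimits, its image on $\alpha\boxprod\beta$ is the ordinary pushout product $\alpha_1\boxprod\beta_1$ in $\cat{C}$, which is a (trivial) cofibration by the pushout product axiom there. The level-$0$ component $\ev_0(\alpha\boxprod\beta)$ and the relative latching map at $1$ of $\alpha\boxprod\beta$ both require more work: unwinding $\ev_0$ of the domain $(g\boxprod f')\amalg_{f\boxprod f'}(f\boxprod g')$ and of the codomain $g\boxprod g'$ expresses them as colimits of iterated tensor products $X_i\otimes Y_j$ and $X_i\otimes Y_j\otimes Z_k$, in the spirit of the associativity argument in the proof of Theorem~\ref{thm-arrow-monoidal}. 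Reorganizing these colimits identifies the two maps as iterated Leibniz pushout products of the morphisms $\alpha_0, \beta_0$, the components $\alpha_1, \beta_1$, and the relative latching maps of $\alpha$ and $\beta$; each such iterate is then a (trivial) cofibration in $\cat{C}$ by repeated application of the pushout product axiom in $\cat{C}$.

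The hard part will be this bookkeeping, and I expect it to occupy the bulk of any detailed argument. For the unit axiom, Lemma~\ref{lem-eval-tensor} gives $L_1(QS)\boxprod f \cong L_1(QS\otimes \ev_1 f)$ where $QS\xrightarrow{}S$ is a cofibrant replacement in $\cat{C}$, and $L_1(QS)$ is a cofibrant replacement of the unit $L_1 S$ in $\Arr \cat{C}$ because $L_1$ is left Quillen by~(3). The unit axiom for $\Arr \cat{C}$ then follows from the unit axiom for $\cat{C}$.
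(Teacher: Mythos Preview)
Your treatment of parts~(1)--(4) matches the paper's almost verbatim: direct Reedy structure on $\cat{J}$, the generating sets $L_0 I\cup L_1 I$ and $L_0 J\cup L_1 J$, and the adjoint arguments for $\ev_i$ and $U_i$.

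For part~(5) you diverge significantly. You propose to verify the pushout product axiom for \emph{arbitrary} projective cofibrations $\alpha,\beta$ by unwinding $\ev_0$ and the latching map of $\alpha\boxprod_2\beta$ into iterated Leibniz products. The paper instead exploits the cofibrantly generated hypothesis to reduce to \emph{generating} cofibrations, and then uses Lemma~\ref{lem-eval-tensor} to compute directly: $L_0 i\boxprod_2\beta = L_0(i\boxprod\ev_1\beta)$ and $L_1 i\boxprod_2 L_1 j = L_1(i\boxprod j)$, both of which are visibly (trivial) cofibrations in $\Arr\cat{C}$. This is dramatically simpler than your bookkeeping program; indeed the author remarks immediately after the theorem that your direct route ``involves so many pushout diagrams of pushout diagrams that the author got too confused to finish the proof.'' Your approach would, if carried through, remove the cofibrantly generated hypothesis from part~(5), which is a genuine gain --- but you have not actually carried it through, and the paper's reduction to generators is the shortcut you are missing.

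One small error: in your unit argument you write $L_1(QS)\boxprod f\cong L_1(QS\otimes\ev_1 f)$, citing Lemma~\ref{lem-eval-tensor}. That formula is for $L_0$, not $L_1$. The correct identity (and the one the paper uses) is $L_1(QS)\boxprod f = QS\otimes f$ as a map $QS\otimes\ev_0 f\to QS\otimes\ev_1 f$, which then reduces the unit axiom to that of $\cat{C}$ applied at both levels.
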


The reader may well object that part~(5) of the theorem above should
not require the cofibrantly generated hypothesis.  This is surely
correct, but the proof involves so many pushout diagrams of pushout
diagrams that the author got too confused to finish the proof.  

The reader should note that the identity functor is a Quillen
equivalence from the projective model structure to the injective model
structure on $\Arr \cat{C}$.  

At least if $\cat{C}$ is cofibrantly generated, the projective model
structure can also be iterated, and we could have the doubly
projective model structure on $\Arr \Arr \cat{C}$, in addition to the
projective injective model structure and the injective projective
model structure.  We don't know if these are useful.  

\begin{proof}
We think of the category $\cat{J}$ with two objects and one
non-identity map as a direct category in the sense
of~\cite[Definition~5.1.1]{hovey-model}.
Then~\cite[Theorem~5.1.3]{hovey-model} implies parts~(1) and~(2),
since $\Arr \cat{C}$ is the category of $\cat{J}$-diagrams in
$\cat{C}$.  Then~\cite[Remark~5.1.7]{hovey-model} tells us that if
$\alpha$ is a cofibration in the projective model structure,
$\ev_{1}\alpha $ (and also $\ev_{1}\alpha$, of course) is a
cofibration in $\cat{C}$.  Part~(3) now follows easily, since the
functors $\ev_{i}$ preserve weak equivalences, fibrations, and
cofibrations.  

For part~(4), suppose $\cat{C}$ is cofibrantly generated,
with generating cofibrations $I$ and generating trivial cofibrations
$J$.  Then $\Arr \cat{C}$ is cofibrantly generated with generating
cofibrations $L_{0}I\cup L_{1}I$ and generating trivial cofibrations
$L_{0}J\cup L_{1}J$, by~\cite[Remark~5.1.8]{hovey-model}.  

We must now verify that $\Arr \cat{C}$ is a symmetric monoidal model
category when $\cat{C}$ is so (and is cofibrantly generated).  We will
check the unit condition below. For the remaining condition, because
$\Arr \cat{C}$ is cofibrantly generated, we just have to check that if
$\alpha$ is a generating cofibration and $\beta$ is a generating
(trivial) cofibration of $\Arr \cat{C}$, then the pushout product
$\alpha \boxprod_{2} \beta $ (not to be confused with the monoidal
structure $\boxprod$ in $\Arr \cat{C}$) is a (trivial) cofibration in
$\Arr \cat{C}$.  Here, if $\alpha \mathcolon f\xrightarrow{}g$ and
$\beta \mathcolon f'\xrightarrow{}g'$,
\[
\alpha \boxprod_{2} \beta \mathcolon (f\boxprod g') \amalg_{f\boxprod
f'} (g\boxprod f') \xrightarrow{} g\boxprod g'.
\]
Now, if $\alpha =L_{0}i\mathcolon L_{0}A\xrightarrow{}L_{0}B$, it
follows from Lemma~\ref{lem-eval-tensor} and some computation that 
\[
L_{0}i\boxprod_{2}\beta = L_{0} (i\boxprod \ev_{1}\beta).  
\]
In particular, if $i$ is a cofibration in $\cat{C}$ and $\beta$ is a
(trivial) cofibration in $\Arr \cat{C}$, then $\ev_{1}\beta$ is a
(trivial) cofibration in $\cat{C}$, so $i\boxprod \ev_{1}\beta$ is a
(trivial) cofibration in $\cat{C}$, and thus $L_{0}i\boxprod_{2}\beta$
is a (trivial) cofibration in $\Arr \cat{C}$.

This takes care of all of the $\alpha \boxprod_{2}\beta$ we need to
consider except $\alpha =L_{1}i$ and $\beta =L_{1}j$.  Since
$L_{1}$ is symmetric monoidal, we have 
\[
L_{1}i\boxprod_{2} L_{1}j = L_{1} (i\boxprod j).  
\]
Thus, if $i$ is a cofibration in $\cat{C}$ and $j$ is a (trivial)
cofibration in $\cat{C}$, then $i\boxprod j$ is a (trivial)
cofibration in $\cat{C}$, so $L_{1}i\boxprod_{2} L_{1}j$ is a
(trivial) cofibration in $\Arr \cat{C}$, as required.  

We are now left with checking the unit condition.  Recall that the
unit condition in $\cat{C}$ says that, if $QS\xrightarrow{}S$ is a
cofibrant replacement for the unit $S$, then $QS\otimes
A\xrightarrow{}A$ is a weak equivalence for all cofibrant $A$ in
$\cat{C}$.  In $\Arr \cat{C}$, $f$ is a cofibrant object if and only
if $f$ is a cofibration of cofibrant objects of $\cat{C}$.  It follows
that $L_{1} (QS)\mathcolon 0\xrightarrow{}QS$ is a cofibrant
replacement of the unit $L_{1}S$. It follows from Lemma~\ref{lem-eval-tensor}
that 
\[
L_{1} (QS) \boxprod f \xrightarrow{} f = QS\otimes f\xrightarrow{} f.
\]  
In particular, if $f$ is cofibrant, the domain and codomain of $f$ are
cofibrant in $\cat{C}$, so the unit axiom in $\cat{C}$ implies this
map is a weak equivalence in $\Arr \cat{C}$, as required.  
\end{proof}

Just as with the injective model structure, we would like to ensure
that good properties of the symmetric monoidal model category
$\cat{C}$ are inherited by the projective model structure on $\Arr
\cat{C}$.  There is no difficulty with the monoid axiom. 

\begin{proposition}\label{prop-monoid-ax}
If $\cat{C}$ is a cofibrantly generated symmetric monoidal model
category that satisfies the monoid axiom, then the projective model
structure on $\Arr \cat{C}$ also satisfies the monoid axiom.
\end{proposition}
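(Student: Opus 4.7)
The plan is to reduce the monoid axiom to its generating trivial cofibrations, compute $\alpha\boxprod g$ on each generator type, and then transport the monoid axiom from $\cat{C}$ via the evaluation functors. I must show that every transfinite composition of pushouts of maps of the form $\alpha\boxprod g$ is a weak equivalence in the projective model structure, where $\alpha$ is a trivial cofibration in $\Arr\cat{C}$ and $g$ is an arbitrary object. Since $(-)\boxprod g$ preserves colimits (the pushout product monoidal structure is closed) and weak equivalences are closed under retracts, a standard argument reduces the problem to the case where $\alpha$ belongs to the generating set $L_0 J\cup L_1 J$ of Theorem~\ref{thm-projective-model}(4): any trivial cofibration is a retract of a relative $(L_0 J\cup L_1 J)$-cell, and $\boxprod g$ carries this retract-of-cell decomposition across.

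For $\alpha=L_0 j$ with $j\in J$, Lemma~\ref{lem-eval-tensor} yields $L_0 j\boxprod g=L_0(j\otimes \ev_1 g)$, and since $L_0$ sends a map $h$ to the square with both legs $h$, both $\ev_0(L_0 j\boxprod g)$ and $\ev_1(L_0 j\boxprod g)$ are equal to the map $j\otimes \ev_1 g$ in $\cat{C}$. For $\alpha=L_1 j$, recall that $L_1 X=(0\to X)$; since $(-)\otimes Y$ preserves colimits in a closed symmetric monoidal category, $0\otimes Y=0$, and a direct unwinding of the pushout product shows that the object $L_1 A\boxprod g$ is simply the morphism $A\otimes \ev_0 g\to A\otimes \ev_1 g$. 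Hence $\ev_i(L_1 j\boxprod g)=j\otimes \ev_i g$ for $i=0,1$.

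Now let $\beta$ be a transfinite composition of pushouts of maps of the form $\alpha\boxprod g$ with $\alpha\in L_0 J\cup L_1 J$. Colimits in $\Arr\cat{C}$ are computed objectwise, so $\ev_0\beta$ and $\ev_1\beta$ are each transfinite compositions of pushouts of maps of the form $j\otimes A$ with $j\in J$ a generating trivial cofibration of $\cat{C}$ and $A\in\cat{C}$. By the monoid axiom in $\cat{C}$, both $\ev_0\beta$ and $\ev_1\beta$ are weak equivalences, so $\beta$ is a weak equivalence in the projective model structure by Theorem~\ref{thm-projective-model}(1). I expect no substantive obstacle beyond the computation of $L_1 j\boxprod g$, which is a routine check from the definition of the pushout product together with the fact that the initial object is absorbing for $\otimes$.
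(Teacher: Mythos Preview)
Your proof is correct and follows essentially the same approach as the paper's: reduce to the generating trivial cofibrations $L_{0}J\cup L_{1}J$ (the paper cites \cite[Lemma~2.3]{schwede-shipley-monoids} for this step, while you sketch the retract-of-cell argument directly), compute that $\ev_{i}$ applied to $L_{0}j\boxprod g$ and $L_{1}j\boxprod g$ yields maps of the form $j\otimes X$ in $\cat{C}$, and then use that $\ev_{0},\ev_{1}$ commute with colimits to transport the monoid axiom from $\cat{C}$. Your explicit unwinding of $L_{1}j\boxprod g$ via $0\otimes Y=0$ is exactly the content the paper obtains from Lemma~\ref{lem-eval-tensor}.
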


\begin{proof}
According to~\cite[Lemma~2.3]{schwede-shipley-monoids}, it suffices to
check that transfinite compositions of pushouts of maps in $\Arr
\cat{C}$ of the form $L_{0}j\boxprod f$ and $L_{1}j\boxprod f$ are
weak equivalences, where $j$ is a trivial cofibration in $\cat{C}$. By
Lemma~\ref{lem-eval}, we have $L_{0}j\boxprod f=L_{0} (j\otimes
\ev_{1}f)$.  Thus 
\[
\ev_{0} (L_{0}j\boxprod f) = \ev_{1} (L_{0}j\boxprod f) = j\otimes
\ev_{1}f, 
\]
and this is a trivial cofibration in $\cat{C}$ tensored with an object
of $\cat{C}$.  Similarly, Lemma~\ref{lem-eval} tells us that 
\[
\ev_{0} (L_{1}j\boxprod f) = j\otimes \ev_{0}f \text{ and } \ev_{1}
(L_{1}j\boxprod f) = j\otimes \ev_{1}f.  
\]
Therefore, these maps are also trivial cofibrations in $\cat{C}$
tensored with objects of $\cat{C}$.  Since $\ev_{0}$ and $\ev_{1}$ are
left adjoints, they commute with transfinite compositions and
pushouts.  Thus, if we apply $\ev_{0}$ or $\ev_{1}$ to a transfinite
composition of pushouts of maps of the form $L_{0}j\boxprod f$ and
$L_{1}j\boxprod f$, we will get a transfinite composition of pushouts
of maps of the form $j\otimes X$, which are weak equivalences in
$\cat{C}$ as required, because $\cat{C}$ satisfies the monoid axiom.  
\end{proof}

\begin{corollary}\label{cor-projective-monoids}
Suppose $\cat{C}$ is a cofibrantly generated symmetric monoidal model
category satisfying the monoid axiom. 
\begin{enumerate}
\item There is a model structure on the category of Smith ideals in
$\cat{C}$, in which a map $\alpha \mathcolon 
j\xrightarrow{}j'$ is a weak equivalence or fibration if and only if
$\ev_{0}\alpha$ and $\ev_{1}\alpha$ are weak equivalences or
fibrations in $\cat{C}$.  
\item Given a Smith ideal $j$, there is a model structure on the
category of $j$-modules in which $\alpha$ is a weak equivalence or
fibration if and only if $\ev_{0}\alpha$ and $\ev_{1}\alpha$ are weak
equivalences or fibrations in $\cat{C}$.
\end{enumerate}
\end{corollary}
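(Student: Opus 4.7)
The plan is to deduce this corollary from the general lifting theorem of Schwede--Shipley for monoids and modules in a cofibrantly generated monoidal model category satisfying the monoid axiom, exactly as was done for Corollary~\ref{cor-injective-monoids} in the injective case. All the needed ingredients have already been assembled in the preceding results.

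First, I would invoke Theorem~\ref{thm-projective-model}: parts~(4) and~(5) (together with part~(1)) tell us that under our hypotheses on $\cat{C}$, the arrow category $\Arr \cat{C}$ equipped with the pushout product monoidal structure and the projective model structure is a cofibrantly generated symmetric monoidal model category. Second, Proposition~\ref{prop-monoid-ax} gives the monoid axiom for this structure on $\Arr \cat{C}$. By Definition~\ref{defn-Smith}, a Smith ideal in $\cat{C}$ is precisely a monoid in $\Arr \cat{C}$ with its pushout product monoidal structure, and similarly a $j$-module is a module in this sense.

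Having verified the hypotheses, I would then apply~\cite[Theorem~3.1]{schwede-shipley-monoids} directly to $\Arr \cat{C}$ with the pushout product monoidal structure. That theorem yields a model structure on the category of monoids (that is, Smith ideals) and on the category of modules over any fixed monoid (that is, $j$-modules for a Smith ideal $j$), in which a map $\alpha$ is a weak equivalence or fibration exactly when its underlying map in $\Arr \cat{C}$ is a weak equivalence or fibration in the projective model structure. By part~(1) of Theorem~\ref{thm-projective-model}, the latter condition translates to: $\ev_0\alpha$ and $\ev_1\alpha$ are weak equivalences or fibrations in $\cat{C}$, yielding the claimed characterization in both parts of the corollary.

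There is no substantive obstacle here: every technical input has been verified upstream, and the argument is simply a citation of Schwede--Shipley applied to the right ambient monoidal model category. The only point one should perhaps remark on is that the forgetful functor from $j$-modules to $\Arr \cat{C}$ creates weak equivalences and fibrations, which is what allows the translation from the projective-model-structure characterization to the componentwise characterization in terms of $\ev_0$ and $\ev_1$; this is precisely what~\cite[Theorem~3.1]{schwede-shipley-monoids} guarantees.
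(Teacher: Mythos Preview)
Your proposal is correct and takes essentially the same approach as the paper: the paper's proof is simply the one-line citation ``This corollary follows from~\cite[Theorem~3.1]{schwede-shipley-monoids},'' and you have spelled out exactly the verifications (Theorem~\ref{thm-projective-model} and Proposition~\ref{prop-monoid-ax}) that make that citation legitimate.
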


This corollary follows
from~\cite[Theorem~3.1]{schwede-shipley-monoids}.  

It is also useful to point out the following fact.  

\begin{corollary}\label{cor-projective-cofibrant}
Suppose $\cat{C}$ is a cofibrantly generated symmetric monoidal model
category satisfying the monoid axiom.
\begin{enumerate}
\item The functor $\ev_{1}$ is a left Quillen functor from Smith
ideals in $\cat{C}$ to monoids in $\cat{C}$.  In particular, the
codomain of a cofibrant Smith ideal is a cofibrant monoid.  
\item If the unit $S$ is cofibrant in $\cat{C}$, then a cofibrant
Smith ideal is, in particular, a cofibration of cofibrant objects in
$\cat{C}$.  
\end{enumerate}
\end{corollary}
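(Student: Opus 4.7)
The plan is to obtain (1) as a Quillen functoriality statement for $\ev_{1}$, and (2) as an application of the general principle that cofibrant monoids over a cofibrant unit are underlying cofibrant.

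For part (1), I would first observe that $\ev_{1}\mathcolon \Arr \cat{C} \to \cat{C}$ is strictly symmetric monoidal from the pushout product monoidal structure to $\cat{C}$: one has $\ev_{1}L_{1}S = S$, and $\ev_{1}(f\boxprod g) = X_{1}\otimes Y_{1} = \ev_{1}f \otimes \ev_{1}g$ directly from Theorem~\ref{thm-arrow-monoidal}. Consequently $\ev_{1}$ sends Smith ideals to monoids in $\cat{C}$, and its right adjoint $U_{1}$ is lax symmetric monoidal (being right adjoint to a strict monoidal functor) and so lifts to a right adjoint on monoids. Fibrations and weak equivalences in both the category of Smith ideals and the category of monoid homomorphisms are detected underlying in $\Arr \cat{C}$ (with the projective model structure) and in $\cat{C}$ respectively; since $\ev_{1}$ is left Quillen at the underlying level by Theorem~\ref{thm-projective-model}(3), $U_{1}$ preserves fibrations and trivial fibrations, hence the lifted $U_{1}$ does so at the monoid level as well. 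This makes the lifted $\ev_{1}$ a left Quillen functor, and applying it to the unique map from the initial Smith ideal to $j$ gives that $\ev_{1}j = R$ is a cofibrant monoid.

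For part (2), I would reduce the statement to showing that a cofibrant Smith ideal is cofibrant when regarded as an object of $\Arr \cat{C}$ in the projective model structure, and then unravel what this means. By Theorem~\ref{thm-projective-model}(4),(5) and Proposition~\ref{prop-monoid-ax}, the projective pushout-product structure on $\Arr \cat{C}$ is a cofibrantly generated symmetric monoidal model category satisfying the monoid axiom, so the Schwede--Shipley result \cite[Thm.~4.1(2)]{schwede-shipley-monoids} applies: provided the unit $L_{1}S$ is cofibrant in the projective model structure, cofibrant Smith ideals are cofibrant in $\Arr \cat{C}$. The cofibrancy of $L_{1}S = (0\to S)$ reduces, via Theorem~\ref{thm-projective-model}(2) applied to the map from the initial arrow to $L_{1}S$, exactly to the hypothesis that $S$ is cofibrant in $\cat{C}$.

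Finally, for any cofibrant Smith ideal $j\mathcolon I \to R$, Theorem~\ref{thm-projective-model}(2) applied to the map from the initial arrow to $j$ shows both that $I$ is cofibrant in $\cat{C}$ (since $\ev_{0}$ of this map is $0 \to I$) and that $j$ itself is a cofibration in $\cat{C}$ (since the pushout-comparison map is $0 \amalg_{0} I = I \to R$); in particular $R$ is cofibrant as well. The step I expect to be the main obstacle is confirming that the Schwede--Shipley principle of cofibrant monoids being underlying cofibrant really does transport to the projective pushout-product model structure on $\Arr \cat{C}$; the cofibrancy of $L_{1}S$ is the crucial input that makes this work, and everything else is bookkeeping with the characterizations of (trivial) cofibrations in Theorem~\ref{thm-projective-model}(2).
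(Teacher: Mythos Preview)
Your proposal is correct and follows essentially the same route as the paper: for (1) you use that $\ev_{1}$ is strict monoidal from the pushout product structure and that its right adjoint $U_{1}$ (sending $R$ to $1_{R}$) visibly preserves fibrations and weak equivalences; for (2) you use that $L_{1}S$ is cofibrant in the projective model structure exactly when $S$ is, and then invoke the Schwede--Shipley result that cofibrant monoids over a cofibrant unit are underlying cofibrant, finishing by unwinding cofibrancy in the projective model structure via Theorem~\ref{thm-projective-model}(2). The paper's proof is the same, only more terse (it cites \cite[Theorem~3.1]{schwede-shipley-monoids} rather than Theorem~4.1(2), and leaves the final unraveling implicit).
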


\begin{proof}
The functor $\ev_{1}$ is strict monoidal with respect to the pushout
product monoidal structure, so does induce a functor from Smith ideals
to monoids.  Its right adjoint $U_{1}$ takes $R$ to $1_{R}$, and this
obviously perserves weak equivalences and fibrations.  Thus $\ev_{1}$
is a left Quillen functor.  

If the unit $S$ is cofibrant in $\cat{C}$, then the unit
$0\xrightarrow{}S$ of the pushout product monoidal structure is
cofibrant in $\Arr \cat{C}$, and so a cofibrant Smith ideal is in
particular cofibrant in the projective model structure on $\Arr
\cat{C}$ by~\cite[Theorem~3.1]{schwede-shipley-monoids}. 
\end{proof}

We would also like to know that if cofibrant modules are flat in
$\cat{C}$ (see Definition~\ref{defn-flat}), then cofibrant modules are
flat in the projective model structure on $\Arr \cat{C}$.  This seems
to be more complicated than the corresponding question for the
injective module structure, so we postpone this discussion to the
appendix, where we will prove the following theorem.  The definition
of a pure class of morphisms is Definition~\ref{defn-pure}.  

\begin{theorem}\label{thm-pure-Smith}
Suppose $\cat{C}$ is a cofibrantly generated symmetric monoidal model
category satisfying the monoid axiom, and $\cat{P}$ is a pure class of
morphisms of $\cat{C}$ containing the maps $i\otimes X$ for all
generating cofibrations $i$ of $\cat{C}$ and all $X\in \cat{C}$.
Assume that the domains and codomains of the generating cofibrations
of $\cat{C}$ are flat.  Then cofibrant modules are flat in the
projective model structure on $\Arr \cat{C}$.  In this case, a weak
equivalence of Smith ideals induces a Quillen equivalence of the
corresponding module categories.  
\end{theorem}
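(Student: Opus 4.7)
The plan is to prove flatness of cofibrant $j$-modules in the projective model structure for every Smith ideal $j\mathcolon I\xrightarrow{}R$; the Quillen equivalence statement then follows directly from \cite[Theorem~3.3]{schwede-shipley-monoids} applied to the model category of Smith ideals supplied by Corollary~\ref{cor-projective-monoids}. Fix such a Smith ideal $j$ together with a cofibrant right $j$-module $M$.

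The first step is to identify the generating cofibrations of right $j$-modules. By the Schwede--Shipley free-module construction these are the maps $\alpha\boxprod j$, where $\alpha$ ranges over the generating cofibrations $L_{0}I\cup L_{1}I$ of $\Arr \cat{C}$ provided by Theorem~\ref{thm-projective-model}(4). Lemma~\ref{lem-eval-tensor} together with a direct computation of the pushout product shows that $L_{0}i\boxprod j$ is $L_{0}(i\otimes R)$, while $L_{1}i\boxprod j$ has $\ev_{0}$ equal to $i\otimes I$ and $\ev_{1}$ equal to $i\otimes R$. In particular, the evaluations at $0$ and $1$ of any generating cofibration of $j$-modules are maps of the form $i\otimes X$ for a generating cofibration $i$ of $\cat{C}$ and $X\in \{I,R\}$, and by hypothesis all such maps lie in the pure class $\cat{P}$.

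Next, since $M$ is a retract of a cell $j$-module and flatness is retract-stable, I may assume $M$ is built as a transfinite composition of pushouts of coproducts of the generating cofibrations above, starting from the initial $j$-module. Given a weak equivalence $\alpha\mathcolon N\xrightarrow{}N'$ of left $j$-modules, I want to show that $M\otimes_{j}\alpha$ is a weak equivalence of objects in $\Arr \cat{C}$, equivalently that $\ev_{0}(M\otimes_{j}\alpha)$ and $\ev_{1}(M\otimes_{j}\alpha)$ are weak equivalences in $\cat{C}$. Because $\ev_{0}$, $\ev_{1}$ and $-\otimes_{j}N$ are all left adjoints, they preserve colimits, so each $\ev_{k}(M\otimes_{j}\alpha)$ is itself a transfinite composition of pushouts in $\cat{C}$ whose cell-attaching maps are, modulo tensoring with $I$ or $R$ coming from the bimodule structure, of the form $(i\otimes X)\otimes_{\cat{C}}\ev_{k}\alpha$ for $i$ and $X$ as above. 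The hypothesis that the domains and codomains of the generating cofibrations of $\cat{C}$ are flat ensures that at each cell-attaching stage the relevant transformation on the attached cells is a weak equivalence in $\cat{C}$.

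The main obstacle, and the reason the purity hypothesis is imposed, is to upgrade these stagewise weak equivalences to a weak equivalence of the entire transfinite composition. This is exactly the content of the purity axiom for $\cat{P}$: transfinite compositions of pushouts of $\cat{P}$-maps preserve weak equivalences, and by the computation above our cell-attaching maps lie in $\cat{P}$. Inducting up the cellular filtration then yields that $\ev_{0}(M\otimes_{j}\alpha)$ and $\ev_{1}(M\otimes_{j}\alpha)$ are weak equivalences in $\cat{C}$, which completes the proof of flatness. The Quillen equivalence of module categories induced by a weak equivalence of Smith ideals is then a direct application of \cite[Theorem~3.3]{schwede-shipley-monoids} in the projective model structure on $\Arr \cat{C}$.
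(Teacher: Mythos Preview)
Your strategy is the same as the paper's, but the execution in the third paragraph is muddled and contains an actual error. The paper organizes the argument more cleanly by first proving a general lemma (Theorem~\ref{thm-pure}): in any cofibrantly generated symmetric monoidal model category with a pure class $\cat{P}$ containing all $i\otimes X$, if domains and codomains of the generating cofibrations are flat then cofibrant $R$-modules are flat for every monoid $R$. It then applies this to $\Arr\cat{C}$ by setting $\cat{Q}=\{\alpha:\ev_{0}\alpha,\ev_{1}\alpha\in\cat{P}\}$, checking that $\cat{Q}$ is pure, that $L_{0}i\boxprod f$ and $L_{1}i\boxprod f$ lie in $\cat{Q}$, and that $L_{0},L_{1}$ preserve flat objects. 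Your direct unwinding via $\ev_{0},\ev_{1}$ is essentially this same argument with Theorem~\ref{thm-pure} inlined.

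Where your write-up goes wrong is the description of the attaching maps after tensoring with $N$. The free $j$-module on $f\in\Arr\cat{C}$ is $f\boxprod j$, and $(f\boxprod j)\boxprod_{j}N\cong f\boxprod N$; the $I$ and $R$ are absorbed, not left over. Thus the attaching map for $\ev_{k}(M_{s}\boxprod_{j}N)\to\ev_{k}(M_{s+1}\boxprod_{j}N)$ is $\ev_{k}((L_{m}i)\boxprod N)=i\otimes \ev_{m'}N$ for appropriate $m'$, which lies in $\cat{P}$. The phrase ``modulo tensoring with $I$ or $R$ coming from the bimodule structure'' and the expression ``$(i\otimes X)\otimes_{\cat{C}}\ev_{k}\alpha$'' conflate the generating cofibrations of $j$-modules (whose evaluations do involve $I,R$) with what they become after $-\boxprod_{j}N$ (which do not). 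Also, $\ev_{k}(M\boxprod_{j}\alpha)$ is a single morphism, not a transfinite composition; what you mean is that $\ev_{k}(M\boxprod_{j}N)$ and $\ev_{k}(M\boxprod_{j}N')$ carry compatible cell filtrations. Once these points are fixed, the induction you sketch is exactly the proof of Theorem~\ref{thm-pure}: at each stage the top row of the pushout has corners $A\otimes\ev_{m'}N$, $B\otimes\ev_{m'}N$ mapped by weak equivalences (flatness of $A,B$) to the $N'$ versions, the horizontal maps are in $\cat{P}$, and purity handles the pushout and limit-ordinal steps.
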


\section{The cokernel functor in the stable case}\label{sec-stable}

In this section, we prove that the cokernel functor is a left Quillen
functor from the projective model structure on $\Arr \cat{C}$ to the
injective model structure.  Even better, when $\cat{C}$ is stable, in
the sense that $\sho \cat{C}$ is triangulated, the cokernel functor is
a Quillen equivalence.  Thus a Smith ideal is the same thing as a
monoid homomorphism, up to homotopy. 

\begin{proposition}\label{prop-cokernel-Quillen}
Suppose $\cat{C}$ is a pointed model category.  The
cokernel is a left Quillen functor from the projective model structure
on $\Arr \cat{C}$ to the injective model structure.
\end{proposition}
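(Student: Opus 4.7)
The plan is to verify directly that $\coker$ takes (trivial) cofibrations in the projective model structure to (trivial) cofibrations in the injective model structure; Theorem~\ref{thm-cokernel-monoidal} already supplies the kernel as right adjoint, so this will complete the Quillen adjunction.

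The setup is clean once one combines the two characterizations. By Theorem~\ref{thm-projective-model}(2), a map $\alpha\colon f\to g$ in $\Arr\cat{C}$, with $f\colon X_{0}\to X_{1}$ and $g\colon Y_{0}\to Y_{1}$, is a (trivial) cofibration in the projective structure precisely when $\alpha_{0}\colon X_{0}\to Y_{0}$ and the pushout-corner $X_{1}\amalg_{X_{0}}Y_{0}\to Y_{1}$ are (trivial) cofibrations in $\cat{C}$. By Theorem~\ref{thm-injective-model}(1), a map in $\Arr\cat{C}$ is a (trivial) cofibration in the injective structure iff both its evaluations are (trivial) cofibrations in $\cat{C}$. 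So the task reduces to showing that both components of $\coker\alpha$ are (trivial) cofibrations whenever $\alpha$ is one projectively.

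The first component is easy: $\ev_{0}(\coker\alpha)=\alpha_{1}\colon X_{1}\to Y_{1}$, which factors as $X_{1}\to X_{1}\amalg_{X_{0}}Y_{0}\to Y_{1}$. The first arrow is a pushout of $\alpha_{0}$ and the second is the pushout-corner, so both are (trivial) cofibrations and hence so is the composite. For the second component, $\ev_{1}(\coker\alpha)\colon\coker f\to\coker g$, I would apply the pushout-pasting lemma to the rectangle
\[
\begin{CD}
Y_{0} @>>> X_{1}\amalg_{X_{0}}Y_{0} @>>> Y_{1} \\
@VVV @VVV @VVV \\
* @>>> \coker f @>>> \coker g
\end{CD}
\]
whose left square is a pushout because $\coker f\cong (X_{1}\amalg_{X_{0}}Y_{0})\amalg_{Y_{0}}*$ by iterated-colimit associativity, and whose outer rectangle is a pushout by the definition of $\coker g$. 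The pasting lemma then makes the right square a pushout, exhibiting $\coker f\to\coker g$ as a pushout of the pushout-corner $X_{1}\amalg_{X_{0}}Y_{0}\to Y_{1}$. Consequently it is also a (trivial) cofibration.

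The only real subtlety is identifying the induced map on cokernels as a pushout of the pushout-corner through the pasting lemma; once that is in place, everything reduces to the characterizations in Theorems~\ref{thm-projective-model}(2) and~\ref{thm-injective-model}(1) together with closure of (trivial) cofibrations under composition and pushout.
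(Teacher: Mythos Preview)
Your proof is correct. You verify directly, using an explicit factorization and the pushout-pasting lemma, that both components of $\coker\alpha$ are (trivial) cofibrations whenever $\alpha$ is a projective (trivial) cofibration. The paper instead packages the second component into the Reedy framework: it embeds $f$ and $g$ as span diagrams $*\leftarrow X_{0}\to X_{1}$ and $*\leftarrow Y_{0}\to Y_{1}$ in $\cat{C}^{\cat{J}}$ (with $\cat{J}$ the walking span, given a suitable Reedy structure), checks that $\alpha$ induces a Reedy (trivial) cofibration between these---the latching conditions being exactly that $1_{0}$, $\alpha_{0}$, and the pushout-corner map are (trivial) cofibrations---and then invokes the cube lemma, which says the pushout is left Quillen out of this Reedy structure. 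For the first component $\alpha_{1}$, the paper silently relies on Theorem~\ref{thm-projective-model}(2), whereas you spell out the factorization. The two arguments do the same underlying work, but yours is more elementary, avoiding Reedy machinery altogether; in particular, your pasting-lemma identification of $\coker f\to\coker g$ as a cobase change of the pushout-corner map is a clean, self-contained observation that the paper's approach leaves implicit inside the Reedy formalism.
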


\begin{proof}
Consider the category $\cat{J}$ with three objects $-1,0,1$ and two
non-identity morphisms $0\xrightarrow{}1$ and $0\xrightarrow{}-1$.  We
consider the first map as raising degree, and the second as lowering
degree.  This makes our category a Reedy
category~\cite[Definition~5.2.1]{hovey-model}.  There is therefore a
model structure on $\cat{J}$-diagrams on
$\cat{C}$~\cite[Theorem~5.2.5]{hovey-model}.  This is the model
structure used in the proof of the cube
lemma~\cite[Lemma~5.2.6]{hovey-model}, where it is proved that the
colimit, which is just the pushout, is a left Quillen functor from
$\cat{C}^{\cat{J}}$ to $\cat{C}$.  

Now suppose $\alpha \mathcolon f\xrightarrow{}g$ is a (trivial)
cofibration in the projective model structure.  Write $f\mathcolon
X_{0}\xrightarrow{}X_{1}$ and $g\mathcolon Y_{0}\xrightarrow{}Y_{1}$.
We then have the associated objects of $\cat{C}^{\cat{J}}$, namely 
\[
\begin{CD}
X_{0} @>f>> X_{1} \\
@VVV \\
0
\end{CD}
\]
and 
\[
\begin{CD}
Y_{0} @>g>> Y_{1} \\
@VVV \\
0.
\end{CD}
\]
The map $\alpha $ induces a map between these diagrams, and this map
is a (trivial) cofibration in $\cat{C}^{\cat{J}}$.  Indeed, because
the map $0\xrightarrow{}-1$ in $\cat{J}$ lowers degree, we just need 
\[
1_{0}\mathcolon 0\xrightarrow{}0, \alpha_{0}\mathcolon
X_{0}\xrightarrow{} Y_{0} \text{ and } (\alpha_{1},g)\mathcolon
X_{1}\amalg_{X_{0}} Y_{0} \xrightarrow{} Y_{1}
\]
to be (trivial) cofibrations, as of course they are.  We conclude that
the induced map 
\[
\coker f\xrightarrow{}\coker g
\]
of the colimits is a (trivial) cofibration, which is just what we need
to make $\coker$ a left Quillen functor from the projective to the
injective model structure.  
\end{proof}

\begin{corollary}\label{cor-cokernel-Quillen}
Suppose $\cat{C}$ is a cofibrantly generated pointed symmetric
monoidal model category.  
\begin{enumerate}
\item The cokernel is a left Quillen functor from the model category of
Smith ideals in $\cat{C}$ to the model cateory of monoid homomorphisms
in $\cat{C}$. 
\item If $j$ is a Smith ideal in $\cat{C}$, the cokernel induces a
left Quillen functor from the model category of $j$-modules to the
model category of $\coker j$-modules.  
\end{enumerate}
\end{corollary}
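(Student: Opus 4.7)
The plan is to deduce both parts formally from three earlier results. Theorem~\ref{thm-Smith-cokernel} supplies the cokernel--kernel adjunction at the algebraic level: cokernel sends Smith ideals to monoid homomorphisms and $j$-modules to $\coker j$-modules, with right adjoint the kernel in each case. Proposition~\ref{prop-cokernel-Quillen} says that, at the level of arrow categories, the cokernel is a left Quillen functor from the projective model structure to the injective model structure; equivalently, the kernel preserves (trivial) fibrations in the opposite direction. Finally, Corollary~\ref{cor-projective-monoids} and Corollary~\ref{cor-injective-monoids} ensure that the model structures on Smith ideals, on monoid homomorphisms, and on the relevant module categories are all lifted, meaning that fibrations and weak equivalences are detected in the corresponding underlying model structure on $\Arr \cat{C}$.

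With these three ingredients in hand, both parts are immediate. For part~(1), let $\alpha$ be a (trivial) fibration of monoid homomorphisms. By the lifting, $\alpha$ is a (trivial) fibration in the injective model structure on $\Arr \cat{C}$. By Proposition~\ref{prop-cokernel-Quillen}, $\ker \alpha$ is then a (trivial) fibration in the projective model structure on $\Arr \cat{C}$; invoking the lifting again shows that $\ker \alpha$ is a (trivial) fibration of Smith ideals. Hence the kernel is right Quillen, so the cokernel is left Quillen. Part~(2) is the same argument applied one level deeper, to $\coker j$-modules and $j$-modules: a (trivial) fibration of $\coker j$-modules sits, by the lifting, as a (trivial) fibration in $\Arr \cat{C}$ with its injective structure; its kernel is then a (trivial) fibration in the projective structure by Proposition~\ref{prop-cokernel-Quillen}; and the lifting for $j$-modules identifies this as a (trivial) fibration there.

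The argument is essentially formal, so I do not expect any real obstacle; all the work has already been done. The one point to keep straight is the direction of the adjunction relative to the two model structures on $\Arr \cat{C}$: the cokernel goes from the projective side to the injective side, and so the induced functor at the algebraic level goes from Smith ideals to monoid homomorphisms, not the other way around. The compatibility of the module-level kernel--cokernel adjunction with the corresponding adjunction on $\Arr \cat{C}$ is the content of Theorem~\ref{thm-Smith-cokernel}, so no additional categorical bookkeeping is required beyond what has already been set up.
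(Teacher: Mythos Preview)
Your proposal is correct and follows essentially the same approach as the paper's proof: show that the right adjoint $\ker$ preserves (trivial) fibrations by using that (trivial) fibrations in the categories of monoids and modules are created in the underlying model structures on $\Arr\cat{C}$, together with Proposition~\ref{prop-cokernel-Quillen}. You are simply more explicit than the paper about which earlier results (Theorem~\ref{thm-Smith-cokernel}, Corollaries~\ref{cor-projective-monoids} and~\ref{cor-injective-monoids}) are being invoked.
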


\begin{proof}
This follows from the general theory.  The right adjoint $\ker$
preserves (trivial) fibrations in $\Arr \cat{C}$, and since (trivial)
fibrations of monoids or modules are just maps of monoids or modules
that are (trivial) fibrations in $\Arr \cat{C}$, $\ker$ will preserve
(trivial) fibrations of monoids and modules.
\end{proof}

We now suppose that $\cat{C}$ is \textbf{stable}, so that $\sho
\cat{C}$ is a triangulated category~\cite[Chapter~7]{hovey-model}.
Since a map of exact triangles in a triangulated category that is an
isomorphism on two of the three spots is an isomorphism on the third
spot as well, we should expect the cokernel to be a Quillen
equivalence in this case.  This is in fact true.

\begin{theorem}\label{thm-cokernel-stable}
Suppose $\cat{C}$ is a stable model category.  Then the cokernel is a
Quillen equivalence from the projective model structure on $\Arr
\cat{C}$ to the injective model structure.
\end{theorem}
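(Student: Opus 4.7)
The plan is to leverage Proposition~\ref{prop-cokernel-Quillen}, which already establishes that $\coker$ is a left Quillen functor with right adjoint $\ker$, and then use the hypothesis of stability to promote this to a Quillen equivalence. By the standard criterion, it suffices to show that for every $f$ cofibrant in the projective model structure and every $g$ fibrant in the injective model structure, a map $\alpha \mathcolon f \xrightarrow{} \ker g$ is a weak equivalence in $\Arr \cat{C}$ if and only if its adjunct $\widetilde{\alpha} \mathcolon \coker f \xrightarrow{} g$ is. By Theorem~\ref{thm-projective-model}(2), such an $f \mathcolon X_{0}\xrightarrow{}X_{1}$ is a cofibration between cofibrant objects, and dually by Theorem~\ref{thm-injective-model}(2), such a $g \mathcolon Y_{0}\xrightarrow{}Y_{1}$ is a fibration between fibrant objects.

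First I would unravel the adjunction to see that both $\alpha$ and $\widetilde{\alpha}$ are encoded by the same data: a pair of maps $\alpha_{0} \mathcolon X_{0} \xrightarrow{} Y_{0}$ and $\alpha_{1}\mathcolon X_{1}\xrightarrow{}Y_{0}$ with $\alpha_{1}f = \alpha_{0}$, together with the property that $g\alpha_{0}$ is null. Concretely, these assemble into a commutative ladder
\[
\begin{CD}
X_{0} @>f>> X_{1} @>>> \coker f \\
@V\alpha_{0}VV @V\alpha_{1}VV @VV\widetilde{\alpha}_{1}V \\
\ker g @>>> Y_{0} @>>g> Y_{1},
\end{CD}
\]
in which $\alpha$ corresponds to the left square and $\widetilde{\alpha}$ corresponds to the right square.

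Next I would invoke stability. Since $\cat{C}$ is stable, $\sho \cat{C}$ is triangulated in the sense of~\cite[Chapter~7]{hovey-model}. Because $f$ is a cofibration between cofibrant objects, the top row extends to an exact triangle $X_{0}\xrightarrow{}X_{1}\xrightarrow{}\coker f\xrightarrow{}\Sigma X_{0}$ in $\sho \cat{C}$; dually, because $g$ is a fibration between fibrant objects, the bottom row extends to an exact triangle $\ker g\xrightarrow{}Y_{0}\xrightarrow{}Y_{1}\xrightarrow{}\Sigma \ker g$. The ladder above becomes a morphism of exact triangles.

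Finally I would apply the standard fact that in a triangulated category, if two of the three vertical maps in a morphism of exact triangles are isomorphisms, then so is the third. The map $\alpha$ is a weak equivalence if and only if $\alpha_{0}$ and $\alpha_{1}$ are weak equivalences, and the map $\widetilde{\alpha}$ is a weak equivalence if and only if $\alpha_{1}$ and $\widetilde{\alpha}_{1}$ are. Since $\alpha_{1}$ is common to both conditions, the two-out-of-three property for morphisms of exact triangles gives the desired equivalence. The main obstacle is bookkeeping: one must confirm that the particular pushout computing $\coker f$ at the level of point-set objects is the correct model of the homotopy cofiber (which is where cofibrancy of $f$ is used), and dually for $\ker g$; once that is set up, identifying the ladder as a map of triangles and invoking the two-out-of-three lemma is formal.
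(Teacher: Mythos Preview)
Your argument is correct and follows essentially the same route as the paper: both reduce the Quillen equivalence criterion to a morphism of exact triangles in $\sho\cat{C}$ built from the cofiber sequence of $f$ and the fiber sequence of $g$, then invoke the two-out-of-three property for maps of triangles. The only wrinkle is a small notational inconsistency---you first declare $\alpha_{0}\mathcolon X_{0}\to Y_{0}$ but then label the vertical arrow $X_{0}\to\ker g$ in the ladder as $\alpha_{0}$---so it would be cleaner to reserve $\alpha_{0}$ for the map into $\ker g$ throughout.
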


\begin{proof}
Suppose $f$ is cofibrant in the projective model structure on $\Arr
\cat{C}$ and $p$ is fibrant in the injective model structure.  This
means that $f\mathcolon A\xrightarrow{}B$ is a cofibration of
cofibrant objects and $p\mathcolon X\xrightarrow{}Y$ is a fibration of
fibrant objects.  Let $\alpha \mathcolon \coker f \xrightarrow{}p$
be a map, which we write 
\[
\begin{CD}
B @>g>> \coker f \\
@V\alpha_{0}VV @VV\alpha_{1}V \\
X @>>p> Y,
\end{CD}
\]
with corresponding map $\beta \mathcolon f\xrightarrow{}\ker p$,
written 
\[
\begin{CD}
A @>f>> B \\
@V\beta_{0}VV @VV\alpha_{0}V \\
\ker p @>>q> X.
\end{CD}
\]
We must show that a map $\alpha $ is a weak equivalence if and only if
$\beta$ is.  In the homotopy category $\sho \cat{C}$, the map $f$
gives rise to a cofiber sequence
\[
A \xrightarrow{f} B \xrightarrow{}\coker f \xrightarrow{} \Sigma A
\]
and the map $g$ gives rise to a fiber sequence
\[
\Omega Y \xrightarrow{} \ker p \xrightarrow{q} X \xrightarrow{p} Y.
\]
Since $\cat{C}$ is stable, every fiber sequence is (canonically
isomorphic to) a cofiber sequence, and so 
\[
\ker p \xrightarrow{q} X \xrightarrow{p} Y \xrightarrow{} \Sigma (\ker p)
\]
is a cofiber sequence,  Then $\alpha$ and $\beta$ together define a
map between two exact triangles in a triangulated category; if
$\alpha$ or $\beta$ is a weak equivalence, then this map is an
isomorphism at two out of the three spots, so also an isomorphism at
the third spot.  Thus $\alpha$ is a weak equivalence if and only if
$\beta$ is so.  
\end{proof}

\begin{corollary}\label{cor-cokernel-Quillen-stable}
Suppose $\cat{C}$ is a cofibrantly generated stable symmetric
monoidal model category in which the unit $S$ is cofibrant.  
\begin{enumerate}
\item The cokernel is a Quillen equivalence from the model category of
Smith ideals in $\cat{C}$ to the model category of monoid homomorphisms
in $\cat{C}$.
\item If $j$ is a cofibrant Smith ideal in $\cat{C}$, the cokernel
induces a Quillen equivalence from the model category of $j$-modules
to the model category of $\coker j$-modules.
\end{enumerate}
\end{corollary}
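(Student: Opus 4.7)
The plan is to lift the Quillen equivalence of Theorem~\ref{thm-cokernel-stable} from $\Arr \cat{C}$ (with the projective versus injective structures) to the corresponding categories of monoids and of modules. By Corollary~\ref{cor-cokernel-Quillen}, in each case the cokernel is already a left Quillen functor, so it remains to verify the Quillen equivalence condition: for a cofibrant object $M$ on the left and a fibrant object $N$ on the right, the map $\coker M \to N$ is a weak equivalence if and only if its adjoint $M \to \ker N$ is. The idea is that both the weak equivalences and fibrations in the model categories of Smith ideals, monoid homomorphisms, and modules over them are detected at the underlying level of $\Arr \cat{C}$ (Corollaries~\ref{cor-injective-monoids} and~\ref{cor-projective-monoids}), so the question reduces to the underlying Quillen equivalence of Theorem~\ref{thm-cokernel-stable}.

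For part (1), I would argue that a fibrant monoid homomorphism $p$ is fibrant in the injective model structure on $\Arr \cat{C}$, because fibrations of monoid homomorphisms are precisely underlying fibrations in the injective structure. Dually, Corollary~\ref{cor-projective-cofibrant}(2), which is where the hypothesis that $S$ be cofibrant is used, tells us that a cofibrant Smith ideal $j$ is cofibrant in the projective model structure on $\Arr \cat{C}$. With $j$ projectively cofibrant and $p$ injectively fibrant, Theorem~\ref{thm-cokernel-stable} gives exactly the required biconditional for $\coker j \to p$ versus $j \to \ker p$, and hence the Quillen equivalence.

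For part (2), the argument parallels part (1). Fibrant $\coker j$-modules are injectively fibrant in $\Arr \cat{C}$ by the same detection argument. The cofibrancy step is a little more delicate: the transferred model structure on $j$-modules is generated by applying the free $j$-module functor $(-)\boxprod j$ to the generating cofibrations $L_{0}I\cup L_{1}I$ of the projective structure on $\Arr \cat{C}$. Because $j$ is a cofibrant object of $\Arr \cat{C}$ (Corollary~\ref{cor-projective-cofibrant}(2)), the map $0 \to j$ is a cofibration, and the pushout-product axiom (Theorem~\ref{thm-projective-model}(5)) then forces each generating $j$-module cofibration $i\boxprod j$ to be a cofibration in the projective structure on $\Arr \cat{C}$. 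A transfinite-composition/retract argument upgrades this to any cofibrant $j$-module. Once a cofibrant $j$-module $M$ is projectively cofibrant in $\Arr \cat{C}$, Theorem~\ref{thm-cokernel-stable} applies verbatim to conclude.

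The main obstacle in both parts is the bookkeeping required to propagate cofibrancy from the transferred model structures on monoids and modules down to cofibrancy in the underlying $\Arr \cat{C}$; this is precisely where cofibrancy of $S$ is essential, and in the module case it also requires the pushout-product axiom on $\Arr \cat{C}$. Stability of $\cat{C}$ is not used here directly but is inherited through Theorem~\ref{thm-cokernel-stable}.
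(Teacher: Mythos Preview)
Your argument is correct and follows essentially the same strategy as the paper: reduce to the underlying Quillen equivalence of Theorem~\ref{thm-cokernel-stable} by showing that cofibrant Smith ideals (resp.\ $j$-modules) are projectively cofibrant in $\Arr\cat{C}$ via Corollary~\ref{cor-projective-cofibrant}(2) and the pushout-product axiom, while fibrant monoid homomorphisms (resp.\ $\coker j$-modules) are injectively fibrant because fibrations are created in $\Arr\cat{C}$. The only cosmetic difference is that you invoke the adjunction criterion for Quillen equivalence directly, whereas the paper uses the equivalent formulation from~\cite[Corollary~1.3.16]{hovey-model} (right adjoint reflects weak equivalences between fibrants plus the derived unit is a weak equivalence), which forces it to compare fibrant replacement in monoid homomorphisms with fibrant replacement in $\Arr\cat{C}$; your route sidesteps that small extra step.
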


\begin{proof}
The kernel, as a functor from monoid homomorphisms to Smith ideals,
reflects weak equivalences between fibrant objects.  Indeed, since
fibrations and weak equivalences are created in the underlying model
structures on $\Arr \cat{C}$, this follows from fact that the kernel,
as the right half of a Quillen equivalence on $\Arr \cat{C}$, reflects
weak equivalences between fibrant
objects~\cite[Corollary~1.3.16]{hovey-model}.  

Now suppose $j$ is a cofibrant Smith ideal.  Since $S$ is cofibrant,
the unit $L_{1}S$ of the pushout product monoidal structure is
cofibrant in the projective model structure.  Hence a cofibrant monoid
is in particular cofibrant in $\Arr \cat{C}$
by~\cite[Theorem~3.1]{schwede-shipley-monoids}.  Thus $j$ is cofibrant
in $\Arr \cat{C}$.  This means that $j\xrightarrow{}\ker T (\coker j)$
is a weak equivalence, where $T$ is a fibrant replacement functor in
the injective model structure on $\Arr \cat{C}$, again
using~\cite[Corollary~1.3.16]{hovey-model}.  If $T'$ denotes a fibrant
replacement functor in the category of monoid homomorphisms, there is
a weak equivalence $T (\coker j)\xrightarrow{}T' (\coker j)$ in the
injective model structure.  These are both fibrant objects, and so the
induced map $\ker T (\coker j)\xrightarrow{}\ker T' (\coker j)$ is a
weak equivalence.  Thus the map $j\xrightarrow{}\ker T' (\coker j)$ is
a weak equivalence, and so the cokernel is a Quillen equivalence from
Smith ideals to monoid homomorphisms
by~\cite[Corollary~1.3.16]{hovey-model}.  

Now again assume $j$ is a cofibrant Smith ideal.  The kernel again
reflects weak equivalences between fibrant $\coker j$-modules.  We
claim that a cofibrant $j$-module is also cofibrant in $\cat{C}$, so
that we can repeat the above argument to complete the proof that the
cokernel is a Quillen equivalence from $j$-modules to $Fj$-modules.
Indeed, a cofibrant object in any cofibrantly generated model category
is a transfinite extension of the cokernels of the generating
cofibrations.  In our case, these cokernels are of the form $j\boxprod
q$, where $q$ is a cokernel of a generating cofibration in $\Arr
\cat{C}$ and hence cofibrant in $\Arr \cat{C}$.  Since $j$ is also
cofibrant in $\Arr \cat{C}$, we conclude that cofibrant $j$-modules
are cofibrant in $\Arr \cat{C}$.  
\end{proof}

\appendix
\section{Pure classes of morphisms}\label{sec-appendix}

In this appendix, we prove Theorem~\ref{thm-pure-Smith} about when
cofibrant modules are flat in the projective model structure on $\Arr
\cat{C}$.  

Consider the more general question of when cofibrant $R$-modules are
flat, for $R$ a monoid in a symmetric monoidal model category
$\cat{C}$.  The logical way to approach this is to build up from the
generating cofibrations in $R\Mod$ to all cofibrant $R$-modules.  So
we should have some result that asserts that if the domains and
codomains of the generating cofibrations in $\cat{C}$ are flat, then
all cofibrant $R$-modules are flat, for any $R$.  A cofibrant
$R$-module is a retract of a transfinite composition of pushouts of
maps $i\otimes 1\mathcolon A\otimes R\xrightarrow{}B\otimes R$, where
$i\mathcolon A\xrightarrow{}B$ is a generating cofibration of
$\cat{C}$.  In general, pushouts of maps in model categories only
behave well when the maps are cofibrations, and we cannot expect
$i\otimes 1$ to be a cofibration.  But it might be something a little
weaker.

\begin{definition}\label{defn-pure}
Define a class of morphisms $\cat{P}$ in a model category $\cat{C}$ to be a
\textbf{pure class} if the following properties hold.
\begin{enumerate}
\item A pushout of a map in $\cat{P}$ is in $\cat{P}$.  
\item If we have a map of diagrams 
\[
\begin{CD}
B @<<< A @>f>> C \\
@VVV @VVV @VVV \\
B' @<<< A' @>>f'> C'
\end{CD}
\]
in which the vertical maps are weak equivalences, and $f$ and $f'$ are
in $\cat{P}$, then the induced map of pushouts 
\[
B\amalg_{A} C \xrightarrow{} B' \amalg_{A'} C'
\]
is a weak equivalence.  
\item If $\lambda$ is an ordinal, $X,Y\mathcolon \lambda
\xrightarrow{}\cat{C}$ are colimit-preserving functors such that each
map $X_{\alpha}\xrightarrow{}X_{\alpha +1}$ and
$Y_{\alpha}\xrightarrow{}Y_{\alpha +1}$ is in $\cat{P}$, and
$f\mathcolon X\xrightarrow{}Y$ is a natural transformation such that
$f_{\alpha}$ is a weak equivalence for all $\alpha <\lambda$, then the
induced map 
\[
\colim_{\alpha <\lambda} X_{\alpha} \xrightarrow{} \colim_{\alpha
<\lambda} Y_{\alpha}
\]
is a weak equivalence.  
\end{enumerate}
\end{definition}

The basic example of a pure class of morphisms is the class of
cofibrations in a left proper model category,
by~\cite[Proposition~13.5.4]{hirschhorn}
and~\cite[Proposition~17.9.3]{hirschhorn}.

The main advantage of a pure class of morphisms is the following theorem.  

\begin{theorem}\label{thm-pure}
Suppose $\cat{C}$ is a cofibrantly generated symmetric monoidal model
category satisfying the monoid axiom, and $\cat{P}$ is a pure class of
morphisms of $\cat{C}$ containing the maps $i\otimes X$ for all
generating cofibrations $i$ of $\cat{C}$ and all $X\in \cat{C}$.
Assume that the domains and codomains of the generating cofibrations
of $\cat{C}$ are flat.  Then cofibrant $R$-modules are flat, for any
monoid $R$ in $\cat{C}$.
\end{theorem}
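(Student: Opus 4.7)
The plan is to prove flatness by transfinite induction on the cellular construction of cofibrant $R$-modules. Since a cofibrant $R$-module is a retract of a transfinite composition of pushouts of generating cofibrations $i\otimes R\mathcolon A\otimes R\to B\otimes R$ of $R\Mod$, starting from the initial $R$-module, and since weak equivalences are closed under retracts, it suffices to show that every such transfinite composition $M$ is flat. The basic observation that drives the argument is that $(-)\otimes_R N$ is a left adjoint for any left $R$-module $N$, so it preserves all colimits, and moreover $(A\otimes R)\otimes_R N \cong A\otimes N$ naturally. Thus any cell attachment
\[
\begin{CD}
A\otimes R @>>> M_\alpha \\
@Vi\otimes RVV @VVV \\
B\otimes R @>>> M_{\alpha+1}
\end{CD}
\]
becomes, after tensoring with $N$ over $R$, a pushout square whose left edge is $i\otimes N$, and any transfinite colimit of $M_\alpha$'s is sent to the transfinite colimit of the $M_\alpha \otimes_R N$'s.

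Fix a weak equivalence $g\mathcolon N\to N'$ of left $R$-modules, and show by induction on $\alpha$ that $M_\alpha\otimes_R g$ is a weak equivalence. The case $\alpha=0$ is trivial since $M_0$ is the initial object. For the successor step $\alpha\to\alpha+1$, compare the pushout square computing $M_{\alpha+1}\otimes_R N$ with the one computing $M_{\alpha+1}\otimes_R N'$: the corner maps $A\otimes g$ and $B\otimes g$ are weak equivalences because the domains and codomains of the generating cofibrations of $\cat{C}$ are assumed flat, the corner map $M_\alpha\otimes_R g$ is a weak equivalence by inductive hypothesis, and the two bottom-left edges $i\otimes N$ and $i\otimes N'$ lie in $\cat{P}$ by hypothesis. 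Property~(2) of a pure class then gives that $M_{\alpha+1}\otimes_R g$ is a weak equivalence. For a limit ordinal $\lambda$, each transition map $M_\alpha \otimes_R N\to M_{\alpha+1}\otimes_R N$ is a pushout of the $\cat{P}$-map $i\otimes N$, hence lies in $\cat{P}$ by property~(1), and similarly for $N'$; the functors $\alpha\mapsto M_\alpha\otimes_R N$ and $\alpha\mapsto M_\alpha\otimes_R N'$ are colimit-preserving because $(-)\otimes_R N$ and $(-)\otimes_R N'$ preserve colimits. Property~(3) then yields that $M_\lambda \otimes_R g$ is a weak equivalence, completing the induction.

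The main technical point to verify is the interaction of tensoring over $R$ with the cellular construction: that cell-attachment pushouts remain pushouts after $(-)\otimes_R N$ and that cells get identified with pushouts of $i\otimes N$, and that colimit-preservation transfers from $M$ to $M\otimes_R N$. This is essentially formal from the adjunction $(-)\otimes_R N \dashv \Hom(N,-)$ together with the identification $(A\otimes R)\otimes_R N\cong A\otimes N$, but it is what enables the hypotheses of properties~(1), (2), and~(3) to line up correctly at each stage. Closing up under retracts finally handles arbitrary cofibrant $R$-modules, and the ``in particular'' statement for Theorem~\ref{thm-pure-Smith} follows by applying this flatness to the module category of any Smith ideal combined with~\cite[Theorem~3.3]{schwede-shipley-monoids}.
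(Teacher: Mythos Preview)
Your proof is correct and essentially identical to the paper's: both reduce to cell complexes via retracts, then run a transfinite induction using the identification $(A\otimes R)\otimes_{R}N\cong A\otimes N$ to invoke property~(2) of a pure class at successor stages and property~(3) at limit stages, noting along the way that the transition maps are pushouts of $\cat{P}$-maps and hence in $\cat{P}$. Your closing remark about Theorem~\ref{thm-pure-Smith} is extraneous to the present statement but harmless.
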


\begin{proof}
Since retracts of flat $R$-modules are flat, we can assume our
cofibrant $R$-module $X=\colim_{\alpha <\lambda } X_{\alpha }$ for
some ordinal $\lambda$ and some colimit-preserving functor, where each
map $X_{\alpha}\xrightarrow{}X_{\alpha +1}$ is a pushout of a
generating cofibration of $R\Mod$ and $X_{0}=0$.  We prove that
$X_{\alpha}$ is flat for all $\alpha \leq \lambda$ by transfinite
induction, the base case $X_{0}=0$ being obvious.  Let $f\mathcolon
M\xrightarrow{}N$ be a weak equivalence of left $R$-modules.

For the successor ordinal case, we have a pushout 
\[
\begin{CD}
A \otimes R @>i\otimes 1>> B\otimes R \\
@VVV @VVV \\
X_{\alpha} @>>> X_{\alpha +1}
\end{CD}
\]
where $i$ is a generating cofibration of $\cat{C}$.  We then get a map
of pushout squares from
\[
\begin{CD}
A\otimes M @>>> B \otimes M \\
@VVV @VVV \\
X_{\alpha}\otimes_{R} M @>>> X_{\alpha +1}\otimes_{R} M
\end{CD}
\]
to 
\[
\begin{CD}
A\otimes N @>>> B \otimes N \\
@VVV @VVV \\
X_{\alpha}\otimes_{R} N @>>> X_{\alpha +1}\otimes_{R} N.
\end{CD}
\]
Since $A$ and $B$ are flat in $\cat{C}$, and $X_{\alpha}$ is a a flat
$R$-module by the induction hypothesis, this map is a weak equivalence
at all of the corners except the bottom-right corner.  Since the maps
$i\otimes M$ and $i\otimes N$ are in the pure class $\cat{P}$, we
conclude that $X_{\alpha +1}\otimes_{R}f$ is a weak equivalence as
well, concluding the successor ordinal case.  However, we also note
that 
\[
X_{\alpha}\otimes_{R} M \xrightarrow{} X_{\alpha +1} \otimes_{R} M,
\]
and the analogous map for $N$, is in $\cat{P}$.

For the limit ordinal case, suppose $X_{\alpha}$ is flat for all
$\alpha <\beta$.  We have a map of diagrams 
\[
X_{\alpha}\otimes_{R}M\xrightarrow{} X_{\alpha}\otimes_{R}N
\]
that is a weak equivalence for all $\alpha <\beta$, and furthermore
each map $X_{\alpha}\otimes_{R}M \xrightarrow{} X_{\alpha
+1}\otimes_{R}M$ is in $\cat{P}$, as is the corresponding map for
$N$.  Thus 
\[
X_{\beta}\otimes_{R} M \xrightarrow{} X_{\beta} \otimes_{R} N
\]
is a weak equivalence, since $\cat{P}$ is a pure class, completing the
induction.  
\end{proof}

Now we return to $\Arr \cat{}$.  For this we need to transfer our pure
class from $\cat{C}$ to $\Arr \cat{C}$.  Fortunately this is
straightforward, and we get the following theorem.

\begin{theorem}\label{thm-pure-Smith-2}
Suppose $\cat{C}$ is a cofibrantly generated symmetric monoidal model
category satisfying the monoid axiom, and $\cat{P}$ is a pure class of
morphisms of $\cat{C}$ containing the maps $i\otimes X$ for all
generating cofibrations $i$ of $\cat{C}$ and all $X\in \cat{C}$.
Assume that the domains and codomains of the generating cofibrations
of $\cat{C}$ are flat.  Then cofibrant modules are flat in the
projective model structure on $\Arr \cat{C}$.
\end{theorem}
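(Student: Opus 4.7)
The approach is to apply Theorem~\ref{thm-pure} to the category $\Arr \cat{C}$ equipped with the projective model structure and the pushout product monoidal structure. By Theorem~\ref{thm-projective-model}, $\Arr \cat{C}$ is a cofibrantly generated symmetric monoidal model category, and by Proposition~\ref{prop-monoid-ax} it satisfies the monoid axiom. What remains is to produce a suitable pure class of morphisms in $\Arr \cat{C}$ and to verify the flatness of the domains and codomains of its generating cofibrations.

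The natural candidate pure class is
\[
\cat{P}' = \{\alpha \in \mor \Arr \cat{C} : \ev_0 \alpha \in \cat{P} \text{ and } \ev_1 \alpha \in \cat{P}\}.
\]
Since $\ev_0$ and $\ev_1$ are left adjoints (and in particular preserve pushouts and transfinite compositions) and since weak equivalences in the projective model structure on $\Arr \cat{C}$ are precisely the componentwise weak equivalences, the three conditions in Definition~\ref{defn-pure} for $\cat{P}'$ all reduce to the corresponding conditions for $\cat{P}$ by applying $\ev_0$ and $\ev_1$ termwise. Hence $\cat{P}'$ is a pure class in $\Arr \cat{C}$.

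To see that $\cat{P}'$ contains $\iota \boxprod g$ for each generating cofibration $\iota$ of $\Arr \cat{C}$ and each $g \in \Arr \cat{C}$, I would use Lemma~\ref{lem-eval-tensor} and the computations from the proof of Proposition~\ref{prop-monoid-ax}. These give $\ev_0(L_0 i \boxprod g) = \ev_1(L_0 i \boxprod g) = i \otimes \ev_1 g$ and $\ev_\epsilon(L_1 i \boxprod g) = i \otimes \ev_\epsilon g$ for $\epsilon \in \{0,1\}$, each of which lies in $\cat{P}$ by hypothesis. The same formulas also handle the flatness hypothesis: the domains and codomains of the generating cofibrations of $\Arr \cat{C}$ are the objects $L_\delta X$ for $\delta \in \{0,1\}$ and $X$ a domain or codomain of a generating cofibration of $\cat{C}$, and for any weak equivalence $\alpha$ in $\Arr \cat{C}$ both components of $L_\delta X \boxprod \alpha$ are of the form $X \otimes \ev_\epsilon \alpha$, which is a weak equivalence in $\cat{C}$ because $X$ is flat and $\ev_\epsilon \alpha$ is a weak equivalence. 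Theorem~\ref{thm-pure} then delivers the claim.

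The only real obstacle is identifying the right candidate pure class; once the componentwise definition of $\cat{P}'$ is fixed, every verification collapses to applying $\ev_0$ and $\ev_1$ and quoting the corresponding fact in $\cat{C}$, so no genuinely new calculation is needed beyond the pushout-product formulas already recorded in Lemma~\ref{lem-eval-tensor} and Proposition~\ref{prop-monoid-ax}.
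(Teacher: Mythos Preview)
Your proposal is correct and follows essentially the same approach as the paper: define the pure class in $\Arr\cat{C}$ componentwise via $\ev_0$ and $\ev_1$, verify it is pure because colimits and weak equivalences are detected componentwise, and check the remaining hypotheses of Theorem~\ref{thm-pure} using the formulas for $\ev_\epsilon(L_\delta i \boxprod g)$. The paper's proof is organized identically, with only cosmetic differences in presentation.
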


\begin{proof}
The plan is to apply Theorem~\ref{thm-pure} to $\Arr
\cat{C}$.  The generating cofibrations of $\Arr \cat{C}$ consist of
the maps $L_{0}i$ and $L_{1}i$ for generating cofibrations
$i\mathcolon A\xrightarrow{}B$ of $\cat{C}$.  So our first job is to
check that $L_{0}$ and $L_{1}$ preserve flat objects.  Suppose $\alpha
\mathcolon f\xrightarrow{}g$ is a weak equivalence in $\Arr \cat{C}$.
Applying Lemma~\ref{lem-eval}, we find that 
\[
\ev_{i} (L_{0} (X) \boxprod \alpha ) = X\otimes \ev_{1}\alpha 
\]
and 
\[
\ev_{i} (L_{1} (X)\boxprod \alpha) = X\otimes \ev_{i}\alpha 
\]
for $i=0,1$.  If $X$ is flat, $X\otimes \ev_{i}\alpha$ is a weak
equivalence, and so both $L_{0}$ and $L_{1}$ preserve flat objects.
We conclude that the domains and codomains of the generating
cofibrations of $\Arr \cat{C}$ are flat.  

Let $\cat{Q}$ denote the class of morphisms $f$ in $\Arr \cat{C}$ so
that $\ev_{0}f$ and $\ev_{1}f$ are in $\cat{P}$.  Since colimits and
weak equivalences in $\Arr \cat{C}$ are detected in $\cat{C}$, it is
easy to see that $\cat{Q}$ is a pure class of morphisms in $\Arr
\cat{C}$.  It remains to show that both $L_{0}i\boxprod f$ and
$L_{1}i\boxprod f$ are in $\cat{Q}$ for $i$ a generating cofibration
of $\cat{C}$ and $f$ any object of $\Arr \cat{C}$.  But, again using
Lemma~\ref{lem-eval}, we find that 
\[
\ev_{i} (L_{0}i\boxprod f) = i\otimes \ev_{1}f
\]
and 
\[
\ev_{i} (L_{1}i\boxprod f) = i\otimes \ev_{i}f
\]
for $i=0,1$.  Since both of these are in $\cat{P}$, we conclude that
$L_{0}i\boxprod f$ and $L_{1}i\boxprod f$ are in $\cat{Q}$.  Hence we
can apply Theorem~\ref{thm-pure}  to $\Arr \cat{C}$ to get our
theorem.  
\end{proof}


\providecommand{\bysame}{\leavevmode\hbox to3em{\hrulefill}\thinspace}
\providecommand{\MR}{\relax\ifhmode\unskip\space\fi MR }
\providecommand{\MRhref}[2]{%
  \href{http://www.ams.org/mathscinet-getitem?mr=#1}{#2}
}
\providecommand{\href}[2]{#2}

\end{document}